\documentclass[a4paper]{amsart}

\usepackage{amsmath,amscd,amsthm,amsfonts,amssymb,esint,mathtools}
\usepackage{fullpage}
\usepackage{booktabs,tabularx,array}
\usepackage{enumitem}
\usepackage{microtype}
\usepackage[colorlinks=true,linkcolor=blue,citecolor=blue,urlcolor=blue]{hyperref}
\usepackage[nameinlink,noabbrev]{cleveref}

\allowdisplaybreaks
\setlist{nosep}
\newcolumntype{Y}{>{\raggedright\arraybackslash}X}

\newtheorem{theorem}{Theorem}[section]
\newtheorem{proposition}[theorem]{Proposition}
\newtheorem{lemma}[theorem]{Lemma}
\newtheorem{corollary}[theorem]{Corollary}
\theoremstyle{definition}
\theoremstyle{remark}
\newtheorem{remark}[theorem]{Remark}

\crefname{equation}{Equation}{Equations}
\crefname{theorem}{Theorem}{Theorems}
\crefname{proposition}{Proposition}{Propositions}
\crefname{lemma}{Lemma}{Lemmas}
\crefname{corollary}{Corollary}{Corollaries}
\crefname{remark}{Remark}{Remarks}
\crefname{section}{Section}{Sections}
\crefname{subsection}{Subsection}{Subsections}
\crefname{table}{Table}{Tables}
\crefname{lstlisting}{Listing}{Listings}

\newcommand{\F}{\mathbf F}
\newcommand{\Q}{\mathbf Q}
\newcommand{\Z}{\mathbf Z}
\newcommand{\rad}{\operatorname{rad}}
\newcommand{\Leg}[2]{\left(\frac{#1}{#2}\right)}

\usepackage{listings}
\usepackage{url}
\begin{document}
\title{On Sun's Conjectures for Truncated Jacobi-Symbol Determinants via Supersingular Elliptic Curves}
\author{Guo Li}
\subjclass[2020]{11A07, 11E25, 11G05}

\address{School of Mathematical Sciences, University of Chinese Academy of Sciences, No. 19A Yuquan Road,  Beijing 100049, China}
\email{liguo22@mails.ucas.ac.cn}

\author{Xiaojun Yan}
\address{Academy of Mathematics and Systems Science,
Chinese Academy of Sciences,
Beijing
100190, China.}
\email{xjyan95@amss.ac.cn}

\begin{abstract}
We study the truncated Jacobi-symbol determinants 
$$\{c,d\}_n=\det\!\left[\left(\frac{j^2+cjk+dk^2}{n}\right)\right]_{2\le j,k\le n-2}$$ proposed by Zhi-Wei Sun in \cite{ZWSun} and prove Conjectures 5.1(i), 5.2, 5.3, 5.4, 5.5, 5.6(i), 5.7, 5.8, a case of 5.6(ii) of \cite{ZWSun},  Conjecture 4.8(i) of \cite{Sun2019} and some strengthened forms. All results follow from a single unified approach:  for a prime $p$, we diagonalize the nonzero-residue matrix indexed by $\F_p^\times$ and reduce the vanishing of determinants to the supersingular reduction of certain CM elliptic curves.  The same framework extends naturally to further families of parameters, suggesting a general mechanism behind identities of this type.
\end{abstract}

\keywords{Jacobi-symbol determinants, elliptic curves, complex multiplication}

\maketitle

\tableofcontents

\section{Introduction}

Determinants whose entries are Legendre or Jacobi symbols occupy a rich intersection of arithmetic and linear algebra, where closed-form evaluations often encode deep number-theoretic constraints. For an odd prime $p$, let $\Leg{\cdot}{p}$ denote the Legendre symbol. Carlitz \cite{Ca59} proved  $ \det\left[\Leg{j-i}{p}\right]_{1\le i,j\le p-1}=p^{(p-3)/2}.$ Chapman evaluated several Hankel-type Legendre-symbol determinants
\cite{Ch04} and formulated the famous  ``evil determinant" conjecture \cite{Ch12} concerning the truncated  determinant 
\[
\det\left[\Leg{j-i}{p}\right]_{1\le i,j\le \frac{p-1}{2}},
\]
which was subsequently proved by Vsemirnov \cite{Vs12,Vs13}. Sun \cite{Sun2019} broadened the class of determinants under investigation by replacing the linear form $j-i$ with a binary quadratic form $j^2+cjk+dk^2$. Several of the resulting full and nonzero-residue determinants were subsequently evaluated by Krachun, Petrov, Sun, and Vsemirnov \cite{KPSV}.

Sun's recent paper \cite{ZWSun} conjectures further congruences
and vanishing statements for the \emph{truncated} determinant
$\{c,d\}_n$, obtained by deleting the three residue classes
$0,1,-1$.  The truncation is the central difficulty: it destroys
the multiplicative symmetry of the matrix indexed by
$\F_p^\times$.  Our strategy is to temporarily reinstate this
symmetry, diagonalize the resulting matrix and then impose the two boundary conditions---this reduces each conjecture to showing that  specific coefficients of $(X^2+cX+d)^{\frac{p-1}{2}}$ vanish.  We identify these
coefficients with Hasse invariants of explicitly given elliptic
curves; computing their $j$-invariants reveals complex
multiplication, and Deuring's reduction theorem converts the inertness of
$p$ in the CM field into supersingularity, which forces the
coefficient to vanish.  Together these steps yield a unified
proof of all the families stated in \cref{thm:main} below.
(A connection between Legendre-symbol determinants and elliptic
curves was also exploited, in a different context, by Wu \cite{Wu2021}.)

\subsection{Notation and main results}

Let $n>1$ be an odd integer, and  $c,d\in\Z$.  Following
\cite[Equations (1.21)--(1.22)]{Sun2019} and \cite[Equation (5.1)]{ZWSun},
write
\begin{align*}
 \mathcal M_n^{\mathrm{full}}(c,d)&=
 \left[\Leg{j^2+cjk+dk^2}{n}\right]_{0\le j,k\le n-1},
 \quad  [c,d]_n=\det\mathcal M_n^{\mathrm{full}}(c,d),\\
 \mathcal M_n^\times(c,d)&=
 \left[\Leg{j^2+cjk+dk^2}{n}\right]_{1\le j,k\le n-1},
 \quad (c,d)_n=\det\mathcal M_n^\times(c,d),\\
 \mathcal M_n^{\mathrm{trun}}(c,d)&=
 \left[\Leg{j^2+cjk+dk^2}{n}\right]_{2\le j,k\le n-2},
 \quad \{c,d\}_n=\det\mathcal M_n^{\mathrm{trun}}(c,d).
\end{align*}
Here $\Leg{\cdot}{n}$ denotes the Jacobi symbol. Our first result resolves almost all of Sun's conjectures regarding $\{c,d\}_n$ that were proposed in \cite{ZWSun}, by employing the unified approach proposed in \cref{sec:strategy}.

\begin{theorem}[Main theorem]\label{thm:main}
Let $n>1$ be an odd integer and $p$ a prime.
\begin{enumerate}[label=\textup{(\alph*)}]
 \item If $n\equiv1\pmod4$ is not a sum of two squares, then
 $\{3,2\}_n=0$.
 \item If $p\equiv13,19\pmod {24}$, then $\{2,2\}_p=0$; if
 $p\equiv17,23\pmod {24}$, then $\{2,2\}_p\equiv0\pmod p$.
 \item If $n\equiv5\pmod8$, then
 $\{4,2\}_n=\{8,8\}_n=0$.  If $n\equiv5\pmod {12}$, then
 $\{3,3\}_n=0$.
 \item If $n\equiv1\pmod4$ and $\Leg n7=-1$, then
 $\{42,-7\}_n=\{21,112\}_n=0$.
 \item If $n>3$, then $n\mid\{2,3\}_n$.  Moreover,
 $n^2\mid\{2,3\}_n$ whenever $n\not\equiv\pm1\pmod {12}$; in fact,
 this divisibility holds for every composite $n>3$.  For every odd $n>7$, one has
 $n\mid\{6,15\}_n$.
 \item If $n\equiv13,17\pmod {20}$ is a sum of two squares, then
 $\{5,5\}_n=0$.  If $n\equiv11,19\pmod {20}$, then
 $\Leg{\{5,5\}_n}n=0$.
 \item If $n\equiv5\pmod {12}$ is a sum of two squares, then
 $\{10,9\}_n=0$.  If $p\equiv11\pmod {12}$, then
 $\{10,9\}_p\equiv0\pmod p$.
 \item If $n\equiv13,17\pmod {24}$ is a sum of two squares, then
 $\{8,18\}_n=0$.  If $p\equiv19\pmod {24}$, then
 $\{8,18\}_p\equiv0\pmod {p^2}$, while if
 $p\equiv23\pmod {24}$, then $\{8,18\}_p\equiv0\pmod p$.
\end{enumerate}
\end{theorem}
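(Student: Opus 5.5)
The plan is to first treat a prime modulus $p$ and then pass to composite $n$ by multiplicativity of the Jacobi symbol. For a prime $p\nmid d(c^2-4d)$ we would write $\chi=\Leg{\cdot}{p}$ and use the homogeneity $\chi(j^2+cjk+dk^2)=\chi(k)^2\,\chi(x^2+cx+d)$ with $x=j/k$. This shows that $\mathcal M_p^\times(c,d)$ is a group matrix on $\F_p^\times$, namely $f(j/k)$ with $f(x)=\chi(x^2+cx+d)$. It is therefore diagonalized by the multiplicative characters $\psi$ of $\F_p^\times$, with eigenvalues the Jacobi-type sums
\[
\lambda_\psi=\sum_{x\in\F_p^\times}\chi(x^2+cx+d)\psi(x).
\]
Reducing modulo a prime above $p$ in $\Z[\zeta_{p-1}]$ and taking $\psi=\omega^{-m}$, where $\omega$ is the Teichmüller character, we get $\lambda_\psi\equiv -[X^{m}]\,(X^2+cX+d)^{(p-1)/2}$ modulo $p$, up to the usual reindexing $m\mapsto m+(p-1)$.

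Next we handle the truncation. The truncated matrix is obtained from $\mathcal M_p^\times$ by deleting the rows and columns indexed by $\pm1$. By the symmetry $j\mapsto -j$, the matrix $\mathcal M_p^\times$ commutes with negation, so we split into even and odd character blocks. Deleting the two indices $\pm1$ then removes a single index from each block. By Jacobi's complementary minor formula, $\{c,d\}_p$ equals $\det\mathcal M_p^\times$ times the $2\times2$ minor of the inverse matrix at the positions $\{1,-1\}$; when $\mathcal M_p^\times$ is singular we argue directly instead. The aim is an explicit identity (or congruence mod $p$) expressing $\{c,d\}_p$ through the eigenvalues with one factor removed, or through sums $\sum_\psi 1/\lambda_\psi$ over each parity class. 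From this we expect vanishing of $\{c,d\}_p$ to follow from the vanishing of suitable eigenvalues. In the singular case the criterion should be: if the kernel vectors (which are characters) are not killed by imposing the two boundary coordinates, the truncated matrix is still singular. Concretely, if two eigenvalues of the same parity vanish exactly (not just mod $p$), then $\{c,d\}_p=0$; if only mod-$p$ vanishing holds, we get $p\mid\{c,d\}_p$, and $p^2$ when two such coefficients vanish.

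The core step is to show that the relevant coefficients $[X^{m}](X^2+cX+d)^{(p-1)/2}$, typically $m=(p-1)/2$ or its neighbours, vanish modulo $p$. For this we identify each as a Hasse invariant. For example, $[X^{(p-1)/2}]\bigl(X(X^2+cX+d)\bigr)^{(p-1)/2}$ is the Hasse invariant of $E:y^2=x(x^2+cx+d)$, and shifted coefficients arise from twists or curves such as $y^2=x^3(x^2+cx+d)$ reducing to genus-one pieces. We then compute $j(E)$ for each parameter pair and recognize CM. For $\{3,2\}$ we expect CM by $\Z[i]$; for $\{2,2\}$ and $\{4,2\}$, $\{8,8\}$ CM by $\Z[\sqrt{-2}]$; for $\{3,3\}$ and $\{10,9\}$ CM by orders in $\Q(\sqrt{-3})$ or $\Q(i)$; for $\{42,-7\}$ and $\{21,112\}$ CM by $\Q(\sqrt{-7})$; and for $\{5,5\}$, $\{8,18\}$, $\{6,15\}$ CM by fields of discriminant $-20$, $-24$, $-15$ respectively. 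Deuring's criterion then gives supersingularity exactly when $p$ is inert (or ramified) in the CM field, which matches the stated congruence classes. The supersingularity makes the Hasse invariant $0$ mod $p$, so the character-sum eigenvalue is divisible by $p$. Exact vanishing of $\lambda_\psi$ (not just mod $p$) should follow from $|\lambda_\psi|$ being a Jacobi-sum-type quantity of absolute value bounded by $2\sqrt p$ combined with extra symmetry forcing it into $p\Z$, or more directly from the antisymmetry $x\mapsto d/x$, which makes $\lambda_\psi=-\lambda_\psi$ for the relevant quadratic-twisted characters.

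Finally we pass to composite $n$. Writing $n=\prod p_i^{a_i}$, the Jacobi-symbol matrix is a Kronecker-type product via CRT, but the truncation does not factor. So for composite $n$ we instead exhibit an explicit kernel vector built from a prime factor $p\equiv3\pmod4$ (guaranteed by "$n$ not a sum of two squares" or by the congruence hypotheses), tensored with constant vectors on the other factors, and check that it survives the boundary conditions. The $n\mid$ and $n^2\mid$ statements in (e) come from counting, for each prime power factor, the rank deficiency mod $p$. The main obstacle is the truncation/boundary step: making the complementary-minor argument rigorous, especially in the composite and $p^2$ cases where several eigenvalues interact, is where we expect the most work. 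The CM identifications, by contrast, are routine $j$-invariant computations.
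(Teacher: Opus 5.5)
Your proposal has a genuine gap at the core step. You only consider the central coefficient $a_{(p-1)/2}$ ``or its neighbours'', and several of the CM assignments you expect are wrong. For $(c,d)=(2,2),(5,5),(10,9),(8,18),(2,3),(6,15)$, the curve $y^2=x(x^2+cx+d)$ has $j$-invariant $128$, $2048$, $4\cdot 73^3/81$, $-8000/81$, $4000/9$, $864/25$ respectively. None of these is a CM value, so the central coefficient gives nothing in parts (b) and (e)--(h). The curve $y^2=x^3(x^2+cx+d)$ does not help either: it is birational to $y^2=x(x^2+cx+d)$.

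Your guess that $\{2,2\}$ is governed by $\Z[\sqrt{-2}]$ actually contradicts (b). The primes $p=17,19$ split in $\Q(\sqrt{-2})$, while the stated classes $13,17,19,23\pmod{24}$ are exactly those with $\Leg{-6}{p}=-1$.

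The missing idea is that the vanishing coefficient sits at $r=\lfloor p/3\rfloor$ for $(2,2)$ and at $r=\lfloor p/4\rfloor$ for $(5,5),(10,9),(8,18)$. Write $a_r=d^{(p-1)/2}\alpha^{-r}P_r(t)$ with $\alpha^2=d$ and $t=-c/(2\alpha)$. You then need Z.-H.~Sun's congruences, which express $P_{\lfloor p/3\rfloor}(t)$ and $P_{\lfloor p/4\rfloor}(t)$ through point counts on $y^2=x^3+3(4t-5)x+2(2t^2-14t+11)$ and $y^2=x^3-\frac{3}{2}(3t+5)x+9t+7$. These must be upgraded to identities in $\F_p[T]$ by a degree count, because $t$ generally lies only in $\F_{p^2}$. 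It is these curves, at $t^2=\frac12,\frac54$, $t=-\frac53$, $t^2=\frac89$, that have CM by discriminants $-24,-20,-3,-24$.

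Part (e) is not a CM phenomenon at all. A CM argument for $(6,15)$ could only reach primes inert in one fixed field, whereas $n\mid\{6,15\}_n$ is asserted for every odd $n>7$. What actually happens is direct expansion: $a_2=0$ because $4d=3c^2$ for $(2,3)$, and $a_3=0$ because $12d=5c^2$ for $(6,15)$. These identities hold in every $\F_p$, with a few small primes checked by hand. The partners $a_{p-3},a_{p-4}$ then vanish by $a_{p-1-s}=d^{\,s-(p-1)/2}a_s$.

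Several secondary steps would also fail as written.

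\emph{Exact vanishing.} The substitution $x\mapsto d/x$ gives $\lambda_\psi=\chi(d)\psi(d)\lambda_{\bar\psi}$, not $\lambda_\psi=-\lambda_\psi$. It therefore kills only the trivial-character eigenvalue, and only when $\chi(d)=-1$. For cubic or quartic $\psi$, exact vanishing needs $\lambda_\psi\equiv0$ modulo every prime above $p$, which is what $a_r=a_{p-1-r}=0$ supplies. You then need an absolute-value bound; the paper does this with integer coset sums and cubic/quartic coset-indicator kernel vectors.

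\emph{The $p^2$ claims.} Your rule for $p^2$ is not right as stated. Two zero eigenvalues of the same parity leave only a one-dimensional kernel after imposing the boundary condition. The $p^2$ claims use a third zero eigenvalue: the constant vector $\vec w_0$, whose eigenvalue is $-(\Leg dp+1)$. This vanishes because $\Leg{18}{p}=-1$ for $p\equiv19\pmod{24}$, and $\Leg 3p=-1$ for $p\equiv5,7\pmod{12}$.

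\emph{Composite moduli.} Tensoring $(0,\Leg kp)_k$ with a constant vector does not vanish at the indices $\pm1$ of $\Z/n\Z$. You must instead tensor with a vector on $\Z/(n/p)\Z$ that vanishes at $\pm1$, which requires $n/p\ge3$. Also, in the first claims of (f)--(h) every prime factor is $\equiv1\pmod4$, so the useful local vector is the quartic-coset vector, which vanishes at $0,\pm1$. It is not a vector attached to a prime $\equiv3\pmod4$.
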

\begin{remark}
    Parts (a)--(h) correspond respectively to Conjectures 5.1(i), 5.2, 5.3, 5.4, 5.5, 5.6(i), 5.7, and 5.8 of \cite{ZWSun}. Part (f) also covers the $11,19\pmod{20}$ cases of Conjecture 5.6(ii).
\end{remark}

For the full nonzero-residue determinant $(c,d)_n$ defined above, the following additional results hold.

\begin{theorem}[{\cite[Conjecture 4.8(i)]{Sun2019}}]\label{thm:oldSUN}
 For every odd integer $n>3$, $n^2\mid(2,3)_n$; for every odd integer $n>5$, $n^2\mid(6,15)_n$.
\end{theorem}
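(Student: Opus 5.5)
We argue prime by prime, using the diagonalization of \cref{sec:strategy}, and then pass to general odd $n$. Let $p$ be an odd prime with $p\nmid d$ and $p\nmid c^2-4d$, and set $f(x)=\Leg{x^2+cx+d}{p}$. For $j,k\in\F_p^\times$ we have $\Leg{j^2+cjk+dk^2}{p}=f(jk^{-1})$, because $\Leg{k^2}{p}=1$. So $\mathcal M_p^\times(c,d)$, after the column permutation $k\mapsto k^{-1}$ (which only changes the sign), is a group matrix on the cyclic group $\F_p^\times$. Its eigenvalues are
\[
\lambda_\chi=\sum_{x\in\F_p^\times}\chi(x)f(x),
\]
one for each character $\chi$ of $\F_p^\times$, and $(c,d)_p=\pm\prod_\chi\lambda_\chi$. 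Write $h=(p-1)/2$ and $(X^2+cX+d)^h=\sum_i a_iX^i$. Taking $\chi=\omega^{-m}$, with $\omega$ the Teichm\"uller character, and working modulo a prime $\mathfrak p\mid p$ of $\Z[\zeta_{p-1}]$, we get
\[
\lambda_{\omega^{-m}}\equiv-\sum_{i\equiv -m\ (p-1)}a_i \pmod{\mathfrak p}.
\]
Hence $p^2\mid(c,d)_p$ as soon as two of the $\lambda_\chi$ are divisible by $\mathfrak p$. Since $(c,d)_p\in\Z$ and the $\lambda_\chi$ permute under Galois, a norm argument shows this is enough.

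\textbf{Source of pairs.} The substitution $X\mapsto d/X$ gives $a_{p-1-i}=d^{h-i}a_i$. Therefore, for $i\neq h$, the condition $a_i\equiv0\pmod p$ kills the two distinct eigenvalues indexed by $m\equiv -i$ and $m\equiv i$. Three sources of vanishing are available.
\begin{enumerate}[label=\textup{(\roman*)}]
\item The trivial character: $\lambda_1=-1-\Leg dp$, which is $0$ when $\Leg dp=-1$.
\item The quadratic character: $\lambda_{(\cdot/p)}$ is $-a_p$ of $E:y^2=x(x^2+cx+d)$, so $\lambda_{(\cdot/p)}\equiv -a_h$. Since $|a_p(E)|\le2\sqrt p$, it vanishes exactly when $E$ is supersingular. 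For $(c,d)=(2,3)$ and $(6,15)$ the discriminants are $-8$ and $-24$, so I expect $j(E)$ to be a CM value (order of discriminant $-8$ or $-24$). Deuring's criterion then gives supersingularity when $p$ is inert in $\Q(\sqrt{-2})$, resp.\ $\Q(\sqrt{-6})$.
\item An off-center coefficient $a_{h\pm1}$, or more generally $a_i$ with $i\ne h$. Following \cref{sec:strategy}, I would identify it with a Hasse invariant of a companion curve, for instance obtained by differentiating $(X^2+cX+d)^{h}$ or twisting by $X^{\pm1}$, compute its $j$-invariant, and again invoke Deuring. Each such vanishing contributes two eigenvalues at once.
\end{enumerate}

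\textbf{Case analysis.} Split $p$ by its class modulo $24$ (modulo $120$ for $(6,15)$ if necessary). In each class, exhibit two vanishing eigenvalues: either (i) together with (ii), or a single pair from (iii). Classes not covered this way would need a direct check that some $a_i$ vanishes modulo $p^2$ in the appropriate sense. This bookkeeping, and finding the right companion curve in (iii) so that every residue class is covered, is the step I expect to be the main obstacle. The small primes excluded by $n>3$ and $n>5$ are handled by direct computation.

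\textbf{Composite $n$.} If $p^2\mid n$, the entries depend only on $j,k$ modulo $\rad n$, so $\mathcal M_n^\times$ has repeated rows and $(c,d)_n=0$. For squarefree $n=\prod p_\ell$, the Chinese remainder theorem gives $\mathcal M_n^{\mathrm{full}}\cong\bigotimes_\ell\mathcal M_{p_\ell}^{\mathrm{full}}$. I would express $\mathcal M_n^\times$ as a principal submatrix of this tensor product and use the character decomposition on each factor. The goal is to show that $p_\ell^2$ divides the determinant whenever it divides the prime-level quantities, with the primes dividing $cd(c^2-4d)$ treated by hand.
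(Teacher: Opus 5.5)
\textbf{There is a genuine gap: the proposal never finds the coefficients that actually vanish.} Your spectral setup is sound. The character eigenvalues of $\mathcal M_p^\times(c,d)$ are governed by the $a_i$, a vanishing $a_i$ with $i\ne\frac{p-1}{2}$ kills two eigenvalues, and two $\mathfrak p$-divisible eigenvalues give $p^2\mid(c,d)_p$ because $p$ is unramified in $\Q(\zeta_{p-1})$.

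The statement concerns every $n$, so no congruence class of primes may be lost. Deuring-type input can only ever give vanishing for primes inert in some CM field, so it is the wrong tool here. Your source (ii) is also simply unavailable. By \eqref{eq:central-invariants}, $j(E_{2,3})=4000/9$ and $j(E_{6,15})=864/25$. These are not algebraic integers, so the curves have no CM; the discriminants $-8$ and $-24$ of $x^2+cx+d$ are irrelevant. Hence $a_{\frac{p-1}{2}}$ vanishes only at sporadic supersingular primes. Source (i) yields at most one eigenvalue, and only when $\Leg dp=-1$. For example, for $(2,3)$ and $p\equiv\pm1\pmod{12}$ you have nothing at all. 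The bookkeeping you flag as the main obstacle is where the entire proof lives, and it is left undone.

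\textbf{The missing idea is elementary.} The relevant coefficients have degree $2$ and $3$, and a two-term binomial expansion gives, in $\F_p$,
$a_2=\tfrac{p-1}{2}d^{\frac{p-5}{2}}\bigl(d+\tfrac{p-3}{4}c^2\bigr)$ and $a_3=\tfrac{(p-1)(p-3)}{4}cd^{\frac{p-7}{2}}\bigl(d+\tfrac{p-5}{12}c^2\bigr)$.
So $a_2=0$ whenever $4d=3c^2$, and $a_3=0$ whenever $12d=5c^2$. These identities hold exactly over $\Z$ for $(2,3)$ and $(6,15)$ respectively. Therefore $a_2=a_{p-3}=0$ for every $p\ge5$, and $a_3=a_{p-4}=0$ for every $p\ge7$, with no condition on $p$. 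The two indices are distinct for $p\ge7$, resp.\ $p\ge11$, and this gives $p^2\mid(c,d)_p$. In the coincident cases, $p=5$ for $(2,3)$ and $p=7$ for $(6,15)$, an explicit integer kernel vector shows the determinant is $0$.

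\textbf{The composite step also needs a different mechanism.} Your phrase ``$p_\ell^2$ divides the determinant whenever it divides the prime-level quantities'' is not what is needed. It suffices that $\mathcal M_p^{\mathrm{full}}(c,d)$ is singular over $\F_p$. For this use $\vec w_2$, resp.\ $\vec w_3$, extended by $0$ at the zero coordinate, or explicit vectors when $p\mid d$, namely $p=3$ for $(2,3)$ and $p=3,5$ for $(6,15)$. The Kronecker decomposition then bounds the rank of $\mathcal M_n^{\mathrm{full}}$ by $(p-1)n/p$. Deleting the single index $0$ therefore leaves nullity at least $n/p-1\ge2$ over $\F_p$, so $p^2\mid(c,d)_n$ for each $p\mid n$, and squarefreeness gives $n^2\mid(c,d)_n$.
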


\begin{remark}[A general mechanism]
    The unified approach extends well beyond the cases listed above.  See \cref{rem:gene} for a discussion of the underlying generation mechanism and further families of examples.
\end{remark}



\subsection{Proof strategy}\label{sec:strategy}

The three deleted indices $0,1,-1$ are the principal obstacle: excising them destroys the multiplicative symmetry of the matrix indexed by $\F_p^\times$, so that standard group-determinant techniques no longer apply directly. We therefore temporarily restore  $\mathcal M_p^\times(c,d)$.  This matrix is diagonalized by  vectors $(1^s,2^s,\dots,(p-1)^s)^{\mathsf T}$ over $\F_p$, and the eigenvalues are directly related to the coefficients in the expansion of $q(X)^{\frac{p-1}{2}}$ where $q(X)=X^2+cX+d$. 

This observation reduces the determinant calculation to the problem of showing that certain coefficients vanish, which arises from the following three distinct sources. 
\begin{itemize}
    \item For cases (a), (c), and (d) of \cref{thm:main}, the relevant term is the central coefficient (i.e., the coefficient of degree $\frac{p-1}{2}$), which corresponds directly to the Hasse invariant of certain elliptic curves.
    \item For case (e) of \cref{thm:main} and \cref{thm:oldSUN}, the relevant coefficients are of degrees two and three, and their vanishing can be verified via direct expansion.
    \item  The remaining cases of \cref{thm:main} involve coefficient identities at one-third (for case (b)) and one-quarter (for case (f), (g) and (h)) of the  maximal degree $p-1$. In these instances, a crucial ingredient in our approach is to apply Z.-H. Sun's results \cite{Sun2013,SunLegendreII}, which elegantly reduce their vanishing to the Hasse invariants of explicitly defined elliptic curves as well.
\end{itemize}
Having reduced the corresponding scenarios to the study of Hasse invariants, we then compute the $j$-invariants of these curves to identify the relevant CM fields. From there, Deuring's theorem translates the inertness of $p$ into supersingularity, which consequently establishes the required vanishing of these coefficients.

The passage from coefficient vanishing back to the truncated determinant is independent of the arithmetic input. Specifically, a pair of reciprocal zero coefficients yields a vector over $\F_p$ that vanishes at $1$ and $-1$, thereby naturally providing a kernel vector for the truncated matrix. For conjectures asserting exact equality over $\Z$, rather than merely divisibility by $p$, we replace this vector with a character-coset indicator function and apply an absolute-value bound to deduce that every row-vector product evaluates strictly to zero in $\Z$. For composite moduli, the Chinese Remainder Theorem lifts these local kernels to the full residue matrix, where some exceptional small primes are resolved via explicit calculations.

\subsection{Acknowledgments}
The authors thank Zhihan Zhong for the exchanges and discussions during the writing process.
The key observation linking the determinant problem to elliptic curves originated from the proof of case (b) of \cref{thm:main} via the one-third coefficient. GPT-5.6 Sol helped us quickly test the feasibility of this strategy on other conjectures of Sun. The authors assume full responsibility for the correctness, originality, and final presentation of all mathematical statements in this paper.

\section{Basic Observations}

We first identify the eigenvalues of $\mathcal M_p^\times(c,d)$ over $\F_p$.  Subsequent results describe the effect of deleting the boundary coordinates and the process of lifting these properties to a composite modulus. These observations will be used repeatedly later.

Let $p$ be an odd prime such that $p\nmid d$. The coefficients employed in the sequel are defined by
\begin{equation}\label{eq:q-expansion}
 q(X)=X^2+cX+d,
 \qquad q(X)^{\frac{p-1}{2}}=\sum_{s=0}^{p-1}a_sX^s
 \quad\text{in }\F_p[X].
\end{equation}
\begin{lemma}\label[lemma]{lem:spectrum}
For $0\le s\le p-2$, let  $\vec w_s=(1^s,2^s,\ldots ,(p-1)^s)^{\mathsf T}.$ Then
these vectors form a basis of $\F_p^{p-1}$.  Moreover, the following holds:
\begin{enumerate}
    \item We have
    \begin{equation}\label{eq:eigen}
    \mathcal M_p^\times(c,d)\vec w_0=-(a_0+a_{p-1})\vec w_0,
    \qquad
    \mathcal M_p^\times(c,d)\vec w_s=-a_s\vec w_s
    \quad(1\le s\le p-2).
    \end{equation}

\item $a_{p-1-s}=d^{s-\frac{p-1}{2}}a_s $ for $0\le s\le p-1$,  $a_0=\Leg dp$, and  $a_{p-1}=1$.
\item If $a_s=0$ for some $1\le s\le p-2$,
then the vector obtained by appending a $0$ at the zeroth coordinate to $\vec w_s$ belongs to $\ker_{\F_p}\mathcal M_p^{\mathrm{full}}(c,d)$.
\end{enumerate}
\end{lemma}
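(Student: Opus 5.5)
The plan is to use Euler's criterion to replace each Legendre symbol entry by a polynomial over $\F_p$. Then we exploit orthogonality of power sums over $\F_p^\times$. Basis property first: the matrix with columns $\vec w_0,\dots,\vec w_{p-2}$ is the Vandermonde matrix $[j^s]_{1\le j\le p-1,\,0\le s\le p-2}$ on the distinct nodes $1,\dots,p-1$. Its determinant is $\prod_{i<j}(j-i)\neq 0$ in $\F_p$, so the vectors form a basis.

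For (1), reduce modulo $p$ and write
\[
\Leg{j^2+cjk+dk^2}{p}=(j^2+cjk+dk^2)^{\frac{p-1}{2}}=k^{p-1}q(j/k)^{\frac{p-1}{2}}=\sum_{t=0}^{p-1}a_t j^t k^{p-1-t}
\]
for $1\le j,k\le p-1$; this uses $k^{p-1}=1$ and is valid in $\F_p$ even when the form vanishes. The $j$-th entry of $\mathcal M_p^\times\vec w_s$ is then
\[
\sum_{t}a_t j^t\sum_{k=1}^{p-1}k^{p-1-t+s}.
\]
We use the standard fact that $\sum_{k\in\F_p^\times}k^m=-1$ if $(p-1)\mid m$ and $0$ otherwise. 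For $0\le s\le p-2$ and $0\le t\le p-1$, the exponent $m=p-1-t+s$ lies in $[0,2p-3]$. So $(p-1)\mid m$ exactly when $t=s$, or additionally when $s=0,\ t=p-1$ (then $m=0$). Hence the entry equals $-a_sj^s$ for $s\ge1$, and equals $-(a_0+a_{p-1}j^{p-1})=-(a_0+a_{p-1})$ for $s=0$. This gives \eqref{eq:eigen}.

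For (2), note that $a_{p-1}=1$ because $q$ is monic, and $a_0=d^{\frac{p-1}{2}}=\Leg dp$. For the symmetry, use the reciprocal polynomial: $X^2q(1/X)=dX^2+cX+1=d\,q^*(X)$, and compare coefficients. A cleaner route is
\[
X^{p-1}q(d/X)^{\frac{p-1}{2}}=(d^2+cdX+dX^2)^{\frac{p-1}{2}}=d^{\frac{p-1}{2}}q(X)^{\frac{p-1}{2}}.
\]
The left side is $\sum_s a_s d^s X^{p-1-s}$. Comparing coefficients gives $a_s d^s=d^{\frac{p-1}{2}}a_{p-1-s}$, that is, $a_{p-1-s}=d^{s-\frac{p-1}{2}}a_s$, using $p\nmid d$.

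For (3), let $v=(0,1^s,\dots,(p-1)^s)^{\mathsf T}$. For rows $j\neq0$, the product $\mathcal M^{\mathrm{full}}v$ agrees with $\mathcal M^\times\vec w_s$, hence it is $-a_s\vec w_s=0$. For row $j=0$ the entry is
\[
\sum_{k=1}^{p-1}\Leg{dk^2}{p}k^s=\Leg dp\sum_k k^s=0,
\]
since $1\le s\le p-2$ means $(p-1)\nmid s$.

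The only delicate step is the bookkeeping of the exponent range in (1), in particular catching the extra $t=p-1$ contribution when $s=0$. The rest is routine.
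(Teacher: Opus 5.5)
Your proposal is correct and takes essentially the same route as the paper. Part (1) combines Euler's criterion with the power-sum identity over $\F_p^\times$; you expand $\sum_t a_t j^t k^{p-1-t}$ directly where the paper substitutes $x=j/k$, which amounts to the same exponent bookkeeping. Part (2) rests on the identity $X^2q(d/X)=d\,q(X)$, and part (3) is the zeroth-row computation $\Leg dp\sum_k k^s=0$.
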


\begin{proof}
The non-vanishing of the Vandermonde determinant ensures that $\{\vec w_s\}_{0\le s\le p-2}$ constitutes a basis. For $j\in\F_p^\times$, substituting $x=j/k$ yields
\begin{align*}
 (\mathcal M_p^\times(c,d)\vec w_s)_j
 =\sum_{k\in\F_p^\times}q(j/k)^{\frac{p-1}{2}} k^s
 =\sum_{x\in\F_p^\times}q(x)^{\frac{p-1}{2}}(j/x)^s
 =j^s\sum_{u=0}^{p-1}a_u\sum_{x\in\F_p^\times}x^{u-s}.
\end{align*}
Therefore, the power-sum identity $ \sum_{x\in\F_p^\times}x^e=
 \begin{cases}
  -1,&p-1\mid e,\\
  0,&p-1\nmid e
 \end{cases}\quad\text{in }\F_p $ yields \cref{eq:eigen}. Furthermore, the algebraic relation $q\!\left(\frac dX\right)=dX^{-2}q(X)$ implies that $ \sum_s a_sd^sX^{-s}=d^{\frac{p-1}{2}} \sum_u a_uX^{u-(p-1)}.$ Comparing the coefficients on both sides then gives
$a_{p-1-s}=d^{s-\frac{p-1}{2}}a_s$. Finally, for $1\le s\le p-2$, we have $\sum_{k\ne0}\Leg{d k^2}{p}k^s
=\Leg dp\sum_{k\ne0}k^s=0,$
which justifies the assertion regarding the extension by zero.
\end{proof}

The information lost during the truncation of the matrix is captured by evaluating the kernel vectors at the removed nonzero indices.

\begin{proposition}\label[proposition]{prop:boundary}
Let $W$ be a subspace of $\ker_{\F_p}\mathcal M_p^\times(c,d)$. Then we have:
\begin{enumerate}
    \item The coordinate restriction maps the kernel of the evaluation map
\[
 \operatorname{ev}:W\longrightarrow\F_p^2,
 \qquad \vec v\longmapsto(v_1,v_{-1}),
\]
injectively into $\ker\mathcal M_p^{\mathrm{trun}}(c,d)$. Thus
$\dim_{\F_p}\ker\mathcal M_p^{\mathrm{trun}}(c,d)
\ge \dim W-\operatorname{rank}(\operatorname{ev}).$

\item If $1\le s<p-1-s\le p-2$ and $a_s=a_{p-1-s}=0$, then
$0\ne \vec w_s-\vec w_{p-1-s}\in \ker_{\F_p}\mathcal M_p^{\mathrm{trun}}(c,d)$. More generally, let $S\subseteq\{0,1,\ldots,p-2\}$ be such that
$a_s=0$ for every $s\in S\setminus\{0\}$ and
$a_0+a_{p-1}=0$ whenever $0\in S$.  For
$W=\operatorname{span}\{\vec w_s:s\in S\}$ one has
\[
 v_p\!\left(\{c,d\}_p\right)
 \ge \dim W-\operatorname{rank}(\operatorname{ev}),
\]
where $v_p(0)=+\infty$.
\end{enumerate}
\end{proposition}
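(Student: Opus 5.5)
The proof is direct linear algebra over $\F_p$, with the matrices indexed by residues. Index $\mathcal M_p^\times(c,d)$ by $\F_p^\times=\{1,\dots,p-1\}$. Index $\mathcal M_p^{\mathrm{trun}}(c,d)$ by $\{2,\dots,p-2\}=\F_p^\times\setminus\{1,-1\}$. The truncated matrix is then the principal submatrix of $\mathcal M_p^\times(c,d)$ obtained by deleting the rows and columns indexed by $1$ and $-1$.

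For part (1), take $\vec v\in W$ with $v_1=v_{-1}=0$, and let $\vec v'$ be its restriction to the indices $2,\dots,p-2$. For any row index $j\in\{2,\dots,p-2\}$ we have
\[
(\mathcal M_p^{\mathrm{trun}}\vec v')_j=\sum_{k\neq \pm1}\Leg{j^2+cjk+dk^2}{p}v_k=\sum_{k\in\F_p^\times}\Leg{j^2+cjk+dk^2}{p}v_k=(\mathcal M_p^\times\vec v)_j=0,
\]
because the omitted terms carry the factors $v_{\pm1}=0$. So $\vec v'\in\ker\mathcal M_p^{\mathrm{trun}}$.

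Restriction is injective on $\ker(\operatorname{ev})$: the only coordinates it discards are $v_{\pm1}$, and these already vanish there. Rank–nullity gives $\dim\ker(\operatorname{ev})=\dim W-\operatorname{rank}(\operatorname{ev})$, which yields the stated dimension bound.

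For part (2), the first step is to note that $W\subseteq\ker_{\F_p}\mathcal M_p^\times$. This follows from \cref{eq:eigen} of \cref{lem:spectrum}: each $\vec w_s$ with $s\in S$ has eigenvalue $-a_s=0$, or $-(a_0+a_{p-1})=0$ when $s=0$.

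Now specialize to $S=\{s,p-1-s\}$ with $1\le s<p-1-s\le p-2$. Both $\vec w_s$ and $\vec w_{p-1-s}$ lie in the kernel, and they are linearly independent because $\{\vec w_t\}$ is a basis. Their difference is therefore nonzero. We have $\vec w_s(1)=1=\vec w_{p-1-s}(1)$. Also, $s$ and $p-1-s$ have the same parity since $p-1$ is even, so $(-1)^s=(-1)^{p-1-s}$. Hence $\vec w_s-\vec w_{p-1-s}$ lies in $\ker(\operatorname{ev})$.

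Its restriction is nonzero, by the injectivity in part (1). It lies in $\ker\mathcal M_p^{\mathrm{trun}}$ by part (1); this is what the statement means by $\vec w_s-\vec w_{p-1-s}$ belonging to that kernel.

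The only step requiring care is the $p$-adic valuation bound: one must pass from the $\F_p$-nullity to $v_p$ of an integer determinant. The matrix $\mathcal M_p^{\mathrm{trun}}(c,d)$ has integer entries, and its reduction mod $p$ has nullity $r\ge\dim W-\operatorname{rank}(\operatorname{ev})$. I would apply Smith normal form over $\Z$ (equivalently over $\Z_p$). Write the matrix as $UDV$ with $U,V\in GL(\Z)$ and $D$ diagonal. The rank of the reduction mod $p$ equals the number of diagonal entries of $D$ not divisible by $p$. Hence at least $r$ diagonal entries are divisible by $p$, and $v_p(\det)=\sum v_p(D_{ii})\ge r$.

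If the determinant is $0$ the inequality holds trivially, since $v_p(0)=+\infty$. This completes the plan.
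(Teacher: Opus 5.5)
Your proposal is correct and takes the same route as the paper: the paper calls part (1) and the first claim of (2) straightforward, and invokes the Smith normal form over $\Z$ for the valuation bound. You supply exactly the omitted details: the restriction argument with rank--nullity, the parity check $(-1)^s=(-1)^{p-1-s}$ that puts $\vec w_s-\vec w_{p-1-s}$ in $\ker(\operatorname{ev})$, and the count of Smith diagonal entries divisible by $p$.
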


\begin{proof}
The final assertion employs the Smith normal form over $\Z$, while the others are straightforward.
\end{proof}

Although the preceding proposition guarantees the existence of a kernel vector over $\F_p$, establishing exact singularity over $\Z$ requires an integer vector whose row-vector products vanish identically in $\Z$.

\begin{lemma}\label[lemma]{lem:quadratic-boundary}
Suppose that $p\ge5$ is a prime satisfying $p\equiv1\pmod4$ and $p\nmid d(c^2-4d)$. If $\sum_{x\in\F_p}\Leg{xq(x)}p=0 $ and $ \Leg dp=-1$, then the vector $\vec v=(v_k)_{2\le k\le p-2}$ defined by $v_k=\Leg kp-1$ is a nonzero kernel vector of $\mathcal M_p^{\mathrm{trun}}(c,d)$ over $\Z$. Consequently, $\{c,d\}_p=0$.
\end{lemma}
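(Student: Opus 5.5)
The plan is to verify directly that $\mathcal M_p^{\mathrm{trun}}(c,d)\vec v=0$ holds in $\Z$, row by row. The idea is to extend the sum over $k$ from $\{2,\dots,p-2\}$ to all of $\F_p^\times$ and then evaluate two complete character sums. Fix $j$ with $2\le j\le p-2$. The first step is to observe that the would-be boundary coordinates vanish automatically. At $k=1$ we get $v_1=\Leg1p-1=0$. At $k=-1$ we get $v_{-1}=\Leg{-1}p-1=0$, since $p\equiv1\pmod4$. Hence, with $v_k=\Leg kp-1$ for all $k\in\F_p^\times$,
\[
\sum_{k=2}^{p-2}\Leg{j^2+cjk+dk^2}{p}v_k=\sum_{k\in\F_p^\times}\Leg{j^2+cjk+dk^2}{p}\Leg kp-\sum_{k\in\F_p^\times}\Leg{j^2+cjk+dk^2}{p}=:B_j-A_j .
\]
The remaining steps show that $A_j=0$ and $B_j=0$.

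For $A_j$, view $dk^2+cjk+j^2$ as a quadratic polynomial in $k$. Its discriminant is $j^2(c^2-4d)\not\equiv0\pmod p$, because $p\nmid j(c^2-4d)$. The standard evaluation of complete quadratic character sums then gives $\sum_{k\in\F_p}\Leg{dk^2+cjk+j^2}p=-\Leg dp=1$. Removing the $k=0$ term, which is $\Leg{j^2}p=1$, yields $A_j=0$.

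For $B_j$, substitute $k=jx$. Then $j^2+cjk+dk^2=j^2(1+cx+dx^2)$, so
\[
B_j=\Leg jp\sum_{x\ne0}\Leg{x(dx^2+cx+1)}p .
\]
Next substitute $x=1/y$ and use $\Leg{1/y}p=\Leg yp$ together with $\Leg{y^{-2}}p=1$. The inner sum becomes $\sum_{y\ne0}\Leg{y\,q(y)}p$. The $y=0$ term contributes nothing, so this equals $\sum_{y\in\F_p}\Leg{yq(y)}p=0$ by hypothesis. Thus every entry of $\mathcal M_p^{\mathrm{trun}}(c,d)\vec v$ is $0$ in $\Z$.

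Finally, $\vec v\ne0$: since $p\ge5$, there is a quadratic nonresidue $k$, and it necessarily lies in $\{2,\dots,p-2\}$ because $\pm1$ are residues. At that $k$ we have $v_k=-2$. A nonzero integer kernel vector forces $\{c,d\}_p=0$. I expect no serious obstacle. The only points needing care are that both boundary coordinates vanish, which is exactly where $p\equiv1\pmod 4$ is used, and that the nondegeneracy $p\nmid d(c^2-4d)$ is available when evaluating the quadratic character sum.
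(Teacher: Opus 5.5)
Your proposal is correct and follows essentially the same route as the paper. Both arguments use $v_{\pm1}=0$ (from $p\equiv1\pmod 4$) to extend each row sum to $\F_p^\times$, reduce the twisted part by substitution to $\Leg jp\sum_{x}\Leg{xq(x)}p=0$, and kill the untwisted part via the complete quadratic character sum $-\Leg dp$. The only cosmetic differences are that the paper proves that character-sum evaluation by counting points on the conic $y^2=q(x)$ instead of citing it, and that it performs the substitution $x=j/k$ in a single step rather than your two.
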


\begin{proof}
The number of pairs $(x,y)\in\F_p^2$ satisfying $y^2=q(x)$ is $p+\sum_x\Leg{q(x)}p$. On the other hand, this equation is equivalent to
\[
 \left(x+\frac{c}{2}-y\right)\left(x+\frac{c}{2}+y\right)=\frac{c^2-4d}{4}\not\equiv 0\pmod p.
\]
Each choice of the first factor in $\F_p^\times$ uniquely determines the second factor, which in turn uniquely determines $x$ and $y$. Thus, there are exactly $p-1$ solutions. Consequently, we obtain $\sum_{x\in\F_p}\Leg{q(x)}p=-1$, which implies that $\sum_{x\ne0}\Leg{q(x)}p=-1-\Leg dp=0.$ For $j\ne0$, substituting $x=j/k$ yields
\[
 \sum_{k\ne0}\Leg{q(j/k)}p\left[\Leg kp-1\right]
 =\Leg jp\sum_{x\ne0}\Leg{xq(x)}p-\sum_{x\ne0}\Leg{q(x)}{p}=0,
\]
by our hypothesis.  Since $p\equiv1\pmod4$, both $1$ and $-1$ are  quadratic residues modulo $p$, ensuring that the vector vanishes at $k=\pm1$. Finally, the vector is nonzero because $\F_p^\times$ necessarily contains a quadratic non-residue.
\end{proof}

\begin{lemma}\label[lemma]{lem:composite}
Let $n>1$ be odd.
\begin{enumerate}[label=\textup{(\roman*)}]
\item If $n$ is not squarefree, then $\{c,d\}_n=0$ for all integers $c$ and $d$.
\item If $n$ is squarefree, then (up to reordering of indices)
\[
 \mathcal M_n^{\mathrm{full}}(c,d)
 =\bigotimes_{\ell\mid n}\mathcal M_\ell^{\mathrm{full}}(c,d),
\]
where $\otimes$ denotes the Kronecker product. Furthermore, for any prime divisor $p\mid n$,  if $\vec u$ is a nonzero vector in $\ker\mathcal M_p^{\mathrm{full}}(c,d)$ over a field $K$, then there exists a nonzero vector $ \vec z \in K^{n/p}$ such that  $0\ne \vec u \otimes \vec z\in \ker\mathcal M_n^{\mathrm{trun}}(c,d)$ provided that either $n>p$ and $u_0=0$, or $u_0=u_1=u_{-1}=0$.
\item Suppose that $n$ is squarefree. Let $B \subset \Z/n\Z$ be a set of residue classes, and let $b=\vert{}B\vert{}$. For a prime divisor $p$ of $n$, let $h=\dim_{\F_p}\ker\mathcal{M}_p^{\mathrm{full}}(c,d)$. Then the principal submatrix obtained by deleting the indices corresponding to $B$ has nullity at least $h\frac{n}{p}-b$ over $\F_p$.
\end{enumerate}
\end{lemma}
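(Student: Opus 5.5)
For (i), I will produce an explicit integer kernel vector. Write $n=p^2m$ with $p$ prime. The entry $\Leg{j^2+cjk+dk^2}{n}$ depends only on $j,k\bmod n$. Fix two indices $k_1\ne k_2$ in $\{2,\dots,n-2\}$ with $k_1\equiv k_2\pmod{n/p}$. Such a pair exists because $n/p\ge p\ge3$ and $n-3\ge n/p+1$; for example take $k_1=2$ and $k_2=2+n/p$. I claim that columns $k_1$ and $k_2$ of $\mathcal M_n^{\mathrm{trun}}$ coincide, which gives $\{c,d\}_n=0$. To see this, split the Jacobi symbol as $\Leg{\cdot}{n}=\Leg{\cdot}{p}^2\Leg{\cdot}{m}$. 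The factor $\Leg{\cdot}{p}^2$ is the indicator of $p\nmid\cdot$, and it depends only on the argument mod $p$. The factor $\Leg{\cdot}{m}$ depends only on the argument mod $m$. Both $p$ and $m$ divide $n/p=pm$, so the entry depends only on $k\bmod pm$. The columns therefore agree. The only work in this part is checking the index range.

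For (ii), assume $n$ is squarefree. The CRT bijection $\Z/n\cong\prod_\ell\Z/\ell$ together with multiplicativity $\Leg{x}{n}=\prod_\ell\Leg{x}{\ell}$ gives the Kronecker factorization directly. Now write $n=pm$ and $\mathcal M_n^{\mathrm{full}}=\mathcal M_p^{\mathrm{full}}\otimes\mathcal M_m^{\mathrm{full}}$, with indices $t\leftrightarrow(t\bmod p,\,t\bmod m)$. For every $\vec z$ we have $(\mathcal M_p\otimes\mathcal M_m)(\vec u\otimes\vec z)=0$. The truncated matrix is the principal submatrix on indices outside $\{0,1,-1\}$. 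If $\vec u\otimes\vec z$ vanishes at those three indices, its restriction lies in $\ker\mathcal M_n^{\mathrm{trun}}$: the deleted coordinates contribute nothing, so each remaining row product equals the full one, which is $0$.

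Case $u_0=u_1=u_{-1}=0$: take any $\vec z\ne0$. Since $\vec u\otimes\vec z$ is nonzero, some index $t$ with $u_{t\bmod p}\ne0$ survives, so the restriction is nonzero.

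Case $n>p$ and $u_0=0$: the index $0$ is handled automatically. I need $\vec z\in K^{m}$ with $u_1z_1=0$ and $u_{-1}z_{-1}=0$, where $z$ is indexed mod $m$ and $1,-1$ mean their residues mod $m$. Since $m\ge3$ is odd, $1\not\equiv-1\pmod m$, so I can take $\vec z$ supported on a residue $r\bmod m$ with $r\ne\pm1$, for example $r=0$. Then $\vec u\otimes\vec z$ vanishes at $t=\pm1$ mod $n$, because those have $t\bmod m=\pm1\ne r$. It is nonzero at some $t$ with $t\bmod p$ in the support of $\vec u$ and $t\bmod m=r$, and such a $t$ is not $0,\pm1$ in $\Z/n$.

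For (iii), the kernel of $\mathcal M_p^{\mathrm{full}}\otimes\mathcal M_{n/p}^{\mathrm{full}}$ over $\F_p$ contains $\ker\mathcal M_p\otimes\F_p^{n/p}$, which has dimension $h\,n/p$. Let $V$ be this kernel and impose the $b$ linear conditions that the coordinates in $B$ vanish. This gives a subspace of dimension at least $hn/p-b$. By the same argument as above, restriction maps this subspace injectively into the kernel of the principal submatrix: it is injective because the vectors vanish on $B$. The main obstacle is only bookkeeping: keeping the CRT ordering consistent and confirming that the surviving index avoids $\{0,\pm1\}$ in the (ii) case analysis.
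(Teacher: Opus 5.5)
Your proof is correct and is essentially the paper's argument. Part (ii) is the same CRT/Kronecker construction, with $\vec z$ chosen to vanish at $\pm1$ modulo $n/p$; your versions of (i) (two equal columns $k=2$ and $k=2+n/p$, instead of the bound $\operatorname{rank}\le\rad(n)<n-3$) and (iii) (counting full-matrix kernel vectors that vanish on $B$, instead of bounding $\operatorname{rank}\mathcal M_n^{\mathrm{full}}\le(p-h)\frac{n}{p}$) are just pigeonhole and rank--nullity rephrasings of the paper's steps.
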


\begin{proof}
For (i), the value of the Jacobi symbol $\Leg{x}{n}$ depends only on the residue class of $x$ modulo $\rad(n)$ (the product of the distinct prime factors of $n$). Thus, we have $\operatorname{rank} \mathcal M_n^{\mathrm{trun}}(c,d) \le \rad(n)$. If $n$ is divisible by the square of a prime, then $\rad(n)\le\frac{n}{3}<n-3$. As the truncated matrix has size $n-3$, its determinant necessarily vanishes.

For (ii), the decomposition follows directly from the Chinese Remainder Theorem.
Assuming $n>p$, we have $n/p\ge3$. We may therefore select a nonzero vector $\vec z \in K^{n/p}$ that vanishes at the residue classes $1$ and $-1$ modulo $n/p$. The tensor product $\vec u\otimes \vec z$ then yields the desired vector. The remaining case is immediate.

For (iii), the tensor decomposition implies that $\operatorname{rank}_{\F_p}\mathcal M_n^{\mathrm{full}}(c,d)\le(p-h)\frac{n}{p}$. Since the retained principal submatrix has size $n-b$, its nullity over $\F_p$ is at least $(n-b)-(p-h)\frac{n}{p}=h\frac{n}{p}-b$.
\end{proof}

\section{Vanishing of the Required Coefficients}

The following theorem is crucial for establishing  the vanishing of specific coefficients.

\begin{theorem}[Deuring's reduction theorem]\label{thm:deuring}
Let $\mathcal{E}$ be an elliptic curve over a number field, with $\mathrm{End}(\mathcal{E}) \cong \mathcal{D}$, where $\mathcal{D}$ is an order in an imaginary quadratic field $K$. Let $\mathcal{P}$ be a place of $\bar{\mathbb{Q}}$ over a prime number $p$, where $\mathcal{E}$ has non-degenerate reduction $E$. The curve $E$ is supersingular if and only if $p$ has one prime of $K$ above it ($p$ is ramified or inert). The curve $E$ is ordinary if and only if $p$ splits completely in $K$.
\end{theorem}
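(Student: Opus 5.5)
Deuring's reduction theorem is classical, and the paper only needs it as a black box. Were we to prove it, we would follow the standard argument (as in Lang, \emph{Elliptic Functions}, Ch.~13), which compares the endomorphism ring of $\mathcal E$ with that of its reduction $E$.

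\textbf{Reduction setup.} First enlarge the number field so that every endomorphism of $\mathcal E$ is defined over it and $\mathcal E$ has good reduction at $\mathcal P$. The reduction map $\mathrm{End}(\mathcal E)\to\mathrm{End}(E)$ is an injective ring homomorphism that preserves degrees. Hence $\mathcal D$ embeds in $\mathrm{End}(E)$. Since $E$ is defined over a finite field $\F_q$, its $q$-Frobenius $\pi$ commutes with the reduced endomorphisms. By the standard structure results, $\mathrm{End}(E)\otimes\Q$ is then either an imaginary quadratic field or a definite quaternion algebra. We use the dichotomy: $E$ is ordinary iff $p\nmid\operatorname{tr}\pi$.

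\textbf{$E$ ordinary $\Rightarrow$ $p$ splits.} If $E$ is ordinary, $\mathrm{End}(E)\otimes\Q$ is a quadratic field containing $K$, so it equals $K$, and $\pi$ lies in an order of $K$. The relations $\pi\bar\pi=q$ and $p\nmid\pi+\bar\pi$ show that the ideals $(\pi)$ and $(\bar\pi)$ of $\mathcal O_K$ are coprime and their product is a power of $p$. Both contain primes above $p$, since otherwise $p\mid\pi+\bar\pi$. Therefore $p$ has two distinct primes above it in $K$, i.e.\ it splits.

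\textbf{$p$ splits $\Rightarrow$ $E$ ordinary.} Write $p\mathcal O_K=\mathfrak p\bar{\mathfrak p}$ with $\mathfrak p\ne\bar{\mathfrak p}$. Pick $h$ such that $\mathfrak p^h\cap\mathcal D$ is principal in $\mathcal D$ away from the conductor, and let $\alpha$ be a generator. Then $\deg\alpha=N(\alpha)=p^h$ up to a prime-to-$p$ factor, which we can remove, and $\alpha\notin\bar{\mathfrak p}$. Suppose $E$ were supersingular. Then $E$ has no $p$-torsion, so every endomorphism of $p$-power degree is purely inseparable. Such an endomorphism factors through Frobenius and equals $p^{h/2}$ times an automorphism (after passing to $\F_{p^2}$-models). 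Its reduction $\tilde\alpha$ would then lie in $p^{h/2}\cdot\mathrm{Aut}(E)$. By injectivity of reduction, $\alpha=p^{h/2}u$ with $u$ a unit, so $\alpha\in\bar{\mathfrak p}$, a contradiction. Hence $E$ is ordinary.

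Combining the two directions gives the ramified/inert $\Leftrightarrow$ supersingular part by contraposition.

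\textbf{Main obstacle.} The delicate step is the converse direction when the conductor of $\mathcal D$ is divisible by $p$. There one should first pass to an isogenous curve with CM by the maximal order $\mathcal O_K$, which does not change the reduction type. With that reduction in place, the argument above applies verbatim.
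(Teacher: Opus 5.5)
Your sketch is the standard proof of the cited result (Lang, \emph{Elliptic Functions}, Ch.~13, \S4), which the paper invokes by reference rather than proving, so the approaches agree. The outline is sound: Frobenius lies in $\mathcal O_K$ for ordinary $\Rightarrow$ split, and under supersingularity an endomorphism generating a power of $\mathfrak p$ is forced into $p^{h/2}\mathrm{Aut}(E)$, after an isogeny to the maximal order when $p$ divides the conductor.

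A few points need tightening:
\begin{itemize}
\item Take $h$ even, so that $p^{h/2}\in\Z$.
\item Deduce that $\alpha/p^{h/2}$ is a root of unity from the injectivity of $K\hookrightarrow\mathrm{End}(E)\otimes\Q$ together with $\tilde u$ having finite order. Bare injectivity of reduction does not suffice, since $\tilde u$ is not known to be a reduced endomorphism.
\item The nontriviality of $(\pi)$ and $(\bar\pi)$ follows from $N(\pi)=q>1$, not from $p\nmid\pi+\bar\pi$.
\end{itemize}
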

\begin{proof}
    See \cite[Chapter 13, \S 4, Theorem 12]{LangEllipticFunctions} or \cite[Theorem 1.1]{Zaytsev}.
\end{proof}

\subsection{The central coefficient}

The proofs of cases (a), (c), and (d) of \cref{thm:main} rely on the coefficient with index $\frac{p-1}{2}$ in \cref{eq:q-expansion}. This coefficient possesses the most direct geometric interpretation.

Consider
\begin{equation}\label{eq:central-curve}
 E_{c,d}:\quad y^2=x(x^2+cx+d),
\end{equation}
with discriminant and $j$-invariant
\begin{equation}\label{eq:central-invariants}
 \Delta(E_{c,d})=16d^2(c^2-4d),
 \qquad
 j(E_{c,d})=256\frac{(c^2-3d)^3}{d^2(c^2-4d)}.
\end{equation}
Assuming that $p\nmid 2d(c^2-4d)$, $E_{c,d}/\F_p$ defines an elliptic curve over the finite field $\F_p$.

We recall some standard facts concerning elliptic curves over finite fields.
\begin{theorem}
\label[theorem]{rem:elliptic}
Let $p$ be an odd prime, and let $q=p^n$ for some integer $n \ge 1$.
    For an elliptic curve $E:y^2=f(x)$ over a finite field $\F_q$ and any $1<q'\mid q$, denote
\[
 \operatorname{Ha}_{q'}(E)=[x^{q'-1}]f(x)^{\frac{q'-1}{2}},
\]
the coefficient of $x^{q'-1}$ in $f(x)^{\frac{q'-1}{2}}$, and  $t_{q}(E)=q+1-\#E(\F_{q})$.
We have
\begin{enumerate}
    \item 
 $t_q(E)=-\sum_{x\in\F_q}\Leg{f(x)}q\equiv\operatorname{Ha}_q(E)\pmod p$ and $\operatorname{Ha}_p(E)=0$ if and only if $E/\F_q$ is supersingular.
\item If $p\ge5$ and $E/\F_p$ is supersingular, then $t_p(E)=0$.
\end{enumerate}
\end{theorem}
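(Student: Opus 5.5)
The statement collects standard facts, so the plan is to assemble them from three ingredients: character-sum point counting, the Chevalley--Warning-type power-sum trick, and the characterization of supersingularity via the Frobenius trace.

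\emph{Step 1: the trace identity.} For each $x\in\F_q$ the number of $y$ with $y^2=f(x)$ is $1+\Leg{f(x)}q$, where $\Leg{\cdot}{q}$ is the quadratic character of $\F_q^\times$. Adding the point at infinity gives
\[
 \#E(\F_q)=q+1+\sum_{x}\Leg{f(x)}q,
\]
and hence $t_q(E)=-\sum_x\Leg{f(x)}q$.

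\emph{Step 2: the congruence with the Hasse coefficient.} In $\F_q$ we have $\Leg{u}{q}=u^{(q-1)/2}$, valid also for $u=0$. So modulo $p$,
\[
 \sum_x\Leg{f(x)}q=\sum_{x\in\F_q}f(x)^{(q-1)/2}.
\]
Expand $f^{(q-1)/2}=\sum_e b_e x^e$, which has degree $3(q-1)/2<2(q-1)$. Then use the power-sum identity over $\F_q$: $\sum_{x\in\F_q}x^e$ equals $-1$ if $e>0$ and $(q-1)\mid e$, and equals $0$ otherwise (including $e=0$, since $q\cdot 1=0$). The only surviving exponent is $e=q-1$, so the sum is $-b_{q-1}=-\operatorname{Ha}_q(E)$. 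Therefore $t_q(E)\equiv\operatorname{Ha}_q(E)\pmod p$.

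\emph{Step 3: supersingularity.} $E/\F_q$ is supersingular iff $p\mid t_q(E)$; this is the standard characterization via Frobenius acting on $E[p]$, as in Silverman V.3.1. It remains to pass from $\operatorname{Ha}_q$ to $\operatorname{Ha}_p$. Write $q=p^n$. Since $\frac{q-1}{2}=\frac{p-1}{2}(1+p+\dots+p^{n-1})$ and Frobenius twists act on the coefficients, we get
\[
 f^{(q-1)/2}=\prod_i \bigl(f^{(p-1)/2}\bigr)^{(p^i)},
\]
where the superscript $(p^i)$ means raising the coefficients to the $p^i$-th power and substituting $x\mapsto x^{p^i}$. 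Taking the coefficient of $x^{q-1}$ uses base-$p$ digit bookkeeping, exactly as in Silverman V.4.1(a). This yields the norm formula
\[
 \operatorname{Ha}_q(E)=\prod_{i=0}^{n-1}\operatorname{Ha}_p(E)^{p^i}=N_{\F_q/\F_p}\operatorname{Ha}_p(E).
\]
It follows that $\operatorname{Ha}_p(E)=0$ iff $\operatorname{Ha}_q(E)=0$ iff $p\mid t_q(E)$ iff $E$ is supersingular.

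\emph{Step 4: part (2).} Over $\F_p$, supersingularity gives $p\mid t_p(E)$, and Hasse's bound gives $|t_p(E)|\le2\sqrt p$. For $p\ge5$ we have $2\sqrt p<p$, which forces $t_p(E)=0$.

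\emph{Main obstacle.} The delicate point is the digit bookkeeping in Step 3 showing that only the product of the $x^{p-1}$ coefficients contributes. Because the paper only applies these facts with $q=p$, I would cite Silverman's treatment rather than reprove it in detail.
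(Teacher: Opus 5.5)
Your proof is correct and takes the same route as the paper, which simply cites Silverman's (V.1.3), Theorem V.4.1 and the Hasse bound; your Steps 1--4 unpack exactly those ingredients, including the coefficient identity $A_{p^{r+1}}=A_{p^r}A_p^{p^r}$ behind your norm formula. One small citation correction: the criterion ``$p\mid t_q(E)$ iff $E$ is supersingular'' is not stated verbatim in Silverman V.3.1. It follows from V.3.1 in one line, using $[t_q(E)]=\phi+\hat\phi$ together with the fact that $\hat\phi$ is inseparable exactly in the supersingular case.
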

\begin{proof}
   Statements (1) and (2) follow directly from \cite[(V.1.3)]{Silverman}, \cite[Theorem V.4.1]{Silverman} and the Hasse bound (\cite[Theorem V.1.1]{Silverman}) which asserts that $|t_p(E)|\leq 2\sqrt{p}$.
\end{proof}

For \cref{eq:central-curve},
\begin{equation*}
 \operatorname{Ha}_p(E_{c,d})=[x^{p-1}]\{xq(x)\}^{\frac{p-1}{2}}
 =[x^{\frac{p-1}{2}}]q(x)^{\frac{p-1}{2}}=a_{\frac{p-1}{2}}.
\end{equation*}
Consequently, supersingularity implies that the middle eigenvalue of $\mathcal M_p^\times(c,d)$ vanishes on $\F_p^\times$. The values needed in this subsection are displayed in \cref{tab:central-cm} (see \cref{lst:central-cm}).
\begin{table}[ht]
\centering
\caption{Data used for the central coefficient $a_{\frac{p-1}{2}}$.}
\label{tab:central-cm}
\small
\begin{tabular}{@{}c c c c c@{}}
\toprule
\multicolumn{5}{c}{$E_{c,d}:\ y^2=x(x^2+cx+d)$}\\
\midrule
$(c,d)$ & $\Delta(E_{c,d})$ & $j(E_{c,d})$ & CM discriminant $D$ & CM field \\
\midrule
$(3,2)$ & $2^6$ & $1728$ & $-4$ & $\Q(i)$ \\
$(4,2)$ & $2^9$ & $8000$ & $-8$ & $\Q(\sqrt{-2})$ \\
$(8,8)$ & $2^{15}$ & $8000$ & $-8$ & $\Q(\sqrt{-2})$ \\
$(3,3)$ & $-2^4 3^3$ & $0$ & $-3$ & $\Q(\sqrt{-3})$ \\
$(42,-7)$ & $2^{12}7^3$ & $16581375$ & $-28$ & $\Q(\sqrt{-7})$ \\
$(21,112)$ & $-2^{12}7^3$ & $-3375$ & $-7$ & $\Q(\sqrt{-7})$ \\
\bottomrule
\end{tabular}
\end{table}

\begin{proposition}\label[proposition]{prop:central-local}
Let $(c,d)$ be a row of \cref{tab:central-cm}, with CM discriminant $D$.
Let $p\ge5$ be a prime satisfying $p\nmid\Delta(E_{c,d})$ and $\Leg Dp=-1$.  Then the vector
$\vec u=(u_k)_{0\le k\le p-1}$ defined by
\[
 u_0=0,
 \qquad u_k=\Leg kp\quad(1\le k\le p-1)
\]
is a kernel vector over $\Z$ of
$\mathcal M_p^{\mathrm{full}}(c,d)$.
\end{proposition}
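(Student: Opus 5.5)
We compute the product of $\mathcal M_p^{\mathrm{full}}(c,d)$ with $\vec u$ row by row, using the substitution $x=j/k$ from the proof of \cref{lem:spectrum}. The goal is to reduce every entry to a character sum that vanishes exactly, as an integer, by supersingularity.

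\textbf{Row $j=0$.} The entries are $\Leg{dk^2}{p}=\Leg dp$ for $k\ne0$. Hence
\[
(\mathcal M_p^{\mathrm{full}}\vec u)_0=\Leg dp\sum_{k\ne0}\Leg kp=0 .
\]

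\textbf{Rows $j\ne0$.} We have $j^2+cjk+dk^2=k^2q(j/k)$. Substituting $x=j/k$, so that $k=j/x$, gives
\[
(\mathcal M_p^{\mathrm{full}}\vec u)_j=\sum_{k\ne0}\Leg{q(j/k)}p\Leg kp=\Leg jp\sum_{x\ne0}\Leg{q(x)}p\Leg xp=\Leg jp\sum_{x\in\F_p}\Leg{xq(x)}p .
\]
In the last step the $x=0$ term contributes nothing. By \cref{rem:elliptic}(1), the final sum equals $-t_p(E_{c,d})$. This is an identity in $\Z$, since all entries are $0,\pm1$.

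It therefore suffices to show $t_p(E_{c,d})=0$. Since $p\nmid\Delta(E_{c,d})$ and $p$ is odd, $E_{c,d}$ has good reduction at $p$. From \cref{tab:central-cm}, $j(E_{c,d})\in\{1728,8000,0,16581375,-3375\}$. These are the $j$-invariants of CM curves with endomorphism ring the order of discriminant $D\in\{-4,-8,-3,-28,-7\}$ respectively. Thus $E_{c,d}$, viewed over $\Q$ or $\bar\Q$, has CM by an order in $K=\Q(\sqrt D)$.

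The condition $\Leg Dp=-1$ means $p$ is inert in $K$. Note that $\Leg{-28}{p}=\Leg{-7}{p}$ and $\Leg{-8}p=\Leg{-2}p$, and $p\nmid D$ holds because $p\ge5$ and $p\nmid\Delta$ (which excludes $p=7$ in the $\sqrt{-7}$ rows). Then \cref{thm:deuring} gives that the reduction $E_{c,d}/\F_p$ is supersingular. Since $p\ge5$, \cref{rem:elliptic}(2) yields $t_p(E_{c,d})=0$, so every row product vanishes in $\Z$.

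\textbf{Main obstacle.} The key step is justifying that each curve actually has CM by an order in the stated field. Matching the $j$-invariant is enough here: over $\bar\Q$ the $j$-invariant determines the curve, and the endomorphism ring is an isomorphism invariant. The known class-number-one values are $1728\leftrightarrow-4$, $0\leftrightarrow-3$, $8000=20^3\leftrightarrow-8$, $-3375=-15^3\leftrightarrow-7$, and $16581375=255^3\leftrightarrow-28$. Deuring's theorem applies to the curve over $\bar\Q$, and reduction commutes with the isomorphism at a place of good reduction. The remaining checks are verifying the tabulated $j$-values from \cref{eq:central-invariants}, which is routine arithmetic.
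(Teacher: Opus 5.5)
Your proof is correct and follows the paper's argument essentially step for step. Row $j=0$ vanishes because $\sum_{k\ne0}\Leg kp=0$. Rows $j\ne0$ reduce via $x=j/k$ to $\Leg jp\sum_{x}\Leg{xq(x)}p=-\Leg jp\,t_p(E_{c,d})$, and Deuring's theorem together with \cref{rem:elliptic}(2) forces $t_p(E_{c,d})=0$ in $\Z$. Your explicit identification of the CM orders through the class-number-one $j$-invariants makes precise what the paper leaves to \cref{tab:central-cm} and its SageMath check. Your remark that $p\nmid D$ is harmless but redundant, since $\Leg Dp=-1$ already implies it.
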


\begin{proof}
Deuring's theorem (\cref{thm:deuring}) and \cref{rem:elliptic} together imply
\begin{equation}\label{eq:central-sum}
 t_p(E_{c,d})=-\sum_{x\in\F_p}\Leg{xq(x)}p=0,
\end{equation}
as an equality in $\Z$. For $j\ne0$, substituting $x=j/k$ yields 
\[ \sum_{k\in\F_p} \bigl(\mathcal M_p^{\mathrm{full}}(c,d)\bigr)_{j,k}u_k =\Leg jp\sum_{x\ne0}\Leg{xq(x)}p=0, \] 
where we have used the fact that $k=j/x$ implies $\Leg kp=\Leg jp\Leg{x^{-1}}p=\Leg jp\Leg xp$, and the terms corresponding to $x=0$ or $k=0$ naturally vanish. For $j=0$, the row-vector product evaluates to $\Leg dp\sum_{k\ne0}\Leg kp=0$. Thus, every row-vector product vanishes identically over $\Z$.
\end{proof}

We shall use the following elementary observation in the proofs of cases (a),
(c), and (d) of \cref{thm:main}.
\begin{itemize}
    \item If $n$ is odd, $\gcd(n,2D)=1$, and $\Leg Dn=-1$, then the factorization $\Leg Dn=\prod_{p^e\parallel n}\Leg Dp^e$ forces the existence of a prime divisor $p$ occurring with an odd exponent such that $\Leg Dp=-1$. Equivalently, this prime $p$ is inert in $\Q(\sqrt D)$.

    \item For the exceptional prime $p=3$ (which is excluded by the hypotheses of \cref{prop:central-local}), direct calculation yields the kernel vector
$\vec u_3=(0,1,-1)^{\mathsf T}$ over $\Z$  for each  of the pairs 
$(c,d)=(3,2),(42,-7),(21,112)$.
\end{itemize}

\begin{proof}[\textbf{Proof of \cref{thm:main}(a)}]
If $n$ is not squarefree, \cref{lem:composite}(i) applies.  Suppose that
$n$ is squarefree.  Since $n\equiv1\pmod4$ is not a sum of two squares, the two-squares theorem gives a prime $p\equiv3\pmod4$ dividing $n$.  For
$(c,d)=(3,2)$, the discriminant in \cref{tab:central-cm} is $D=-4$, so
$\Leg Dp=-1$.  If $p=3$, we use $\vec u_3$ as defined above. Otherwise \cref{prop:central-local}  provides a kernel vector for $\mathcal M_p^{\mathrm{full}}(c,d)$.  Here $n>p$ since $n\not\equiv p\pmod 4$. By
\cref{lem:composite}(ii), this kernel lifts to the truncated matrix $\mathcal M_n^{\mathrm {trun}}(3,2)$.
Thus $\{3,2\}_n=0$.
\end{proof}

\begin{proof}[\textbf{Proof of \cref{thm:main}(c)}]
If $n$ is not squarefree, \cref{lem:composite}(i) applies.  Assume that
$n$ is squarefree. 
Note that $\Leg{-2}n=-1$ when
$n\equiv5\pmod8$ and $\Leg{-3}n=-1$ when $n\equiv5\pmod {12}$.

\begin{itemize} 
\item For $(c,d)=(4,2)$ and $(8,8)$,  we have $D=-8$ according to
    \cref{tab:central-cm}.  As argued previously, this guarantees the existence of an inert prime $p$.  If $n>p$, \cref{prop:central-local} and
    \cref{lem:composite}(ii)  yield the desired result.  If $n=p$, then $p\equiv1\pmod4$ and
    $\Leg 2p=\Leg 8p=-1$. Hence, \cref{eq:central-sum} and \cref{lem:quadratic-boundary} establish  the same conclusion for each pair.
\item For $(c,d)=(3,3)$, we have $D=-3$.  The congruence
$n\equiv5\pmod {12}$ excludes $p=3$, allowing \cref{prop:central-local} and the inert prime argument to apply.  The composite case follows from
\cref{lem:composite}(ii), while the prime case is  resolved by
\cref{lem:quadratic-boundary} because $n\equiv1\pmod4$.
\end{itemize}
\end{proof}

\begin{proof}[\textbf{Proof of \cref{thm:main}(d)}]
If $n$ is not squarefree, \cref{lem:composite}(i) applies.  Assume that
$n$ is squarefree.  Note that in this case 
$\Leg{-7}n=\Leg n7=-1$.  The inert prime argument guarantees a prime divisor $p$ of $n$ that is inert in $\Q(\sqrt{-7})$. If $p=3$, we use $\vec u_3$ otherwise, we apply \cref{prop:central-local}.  If $n>p$, \cref{lem:composite}(ii) yields
the desired result.  If $n=p$, then $p\equiv1\pmod4$ and
$\Leg{-7}p=-1$ ensures that the condition $\Leg dp=-1$ holds for both  pairs
$(42,-7)$ and $(21,112)$. Thus \cref{lem:quadratic-boundary} applies to both.
\end{proof}

\subsection{The one-third coefficient}

The proof of case (b) of \cref{thm:main} relies on the coefficient with index $\lfloor p/3\rfloor$ in \cref{eq:q-expansion}. In this subsection, we specialize to $(c,d)=(2,2)$ and write
\[
 q(X)=X^2+2X+2,\qquad
 r=\lfloor \frac{p}{3}\rfloor,\qquad r'=p-1-r.
\]
We shall demonstrate that $a_r=a_{r'}=0$.

    Let $P_r(T)$ denote the $r$th Legendre polynomial, normalized by
\[
 (1-2Tz+z^2)^{-\frac{1}{2}}=\sum_{\ell\ge0}P_\ell(T)z^\ell.
\]
Choose $\alpha\in\overline{\F}_p$ with $\alpha^2=2$ and put
$t=-\alpha^{-1}$. We have 
\begin{equation}\label{eq:third-coeff}
 a_r=[X^r]q(X)^{\frac{p-1}{2}}=[X^r]\left[2\left(1-2t\frac{X}{\alpha}+\frac{X^2}{\alpha^2}\right)\right]^{\frac{p-1}{2}}=2^{\frac{p-1}{2}}\alpha^{-r}P_r(t)\quad\text{in }\overline{\F}_p.
\end{equation} 
Indeed, since $r<\frac{p-1}{2}<p$, the binomial expansion to the power of $\frac{p-1}{2}$ modulo $z^{r+1}$ relies only on the binomial coefficients $\binom{\frac{p-1}{2}}{\ell}$ for $0\le\ell\le r$. These are congruent to $\binom{-\frac{1}{2}}{\ell}$ in $\F_p$ because $\ell!$ is invertible in $\F_p$.

\begin{theorem}[{\cite[Theorem 3.1]{Sun2013}}]\label[theorem]{thm:legendre1}
 Let $p > 3$ be a prime and $t \in \Z_{(p)}$, the set of rational numbers with denominators coprime to $p$. Then
\[
P_{\left\lfloor \frac{p}{3} \right\rfloor}(t) \equiv -\left( \frac{p}{3} \right) \sum_{x=0}^{p-1} \left( \frac{x^3 + 3(4t - 5)x + 2(2t^2 - 14t + 11)}{p} \right) \pmod{p}.
\]
\end{theorem}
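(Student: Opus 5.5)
The plan is to show that both sides are polynomials in $t$ over $\F_p$ of degree at most $r=\lfloor p/3\rfloor$ that satisfy the same Legendre differential equation, and then to fix the scalar by evaluating at $t=1$. Since the congruence is between elements of $\Z_{(p)}$, it suffices to prove it as a polynomial identity in $\F_p[t]$ and then specialize.

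\textbf{Step 1: the left-hand side.} Use the standard formula
\[
P_r(t)=\sum_{k=0}^r\binom rk\binom{r+k}k\Bigl(\frac{t-1}2\Bigr)^k .
\]
If $p\equiv1\pmod 3$, then $r=\frac{p-1}3\equiv-\frac13\pmod p$. If $p\equiv2\pmod3$, then $r\equiv-\frac23\pmod p$. In either case, for $k\le r<p$ we get
\[
\binom rk\binom{r+k}k\equiv(-1)^k\binom{-1/3}k\binom{-2/3}k .
\]
So $P_r(t)$ reduces mod $p$ to the truncation at degree $r$ of ${}_2F_1\bigl(\tfrac13,\tfrac23;1;\tfrac{1-t}2\bigr)$. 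Over $\F_p$ it is characterized, up to a scalar, as the unique polynomial of degree $\le r$ solving $(1-t^2)y''-2ty'+r(r+1)y=0$. Uniqueness holds because the recursion on the coefficients involves only factors $k(k+1)-r(r+1)$ with $k\le r<p$. Those factors are invertible except at $k=r$, so once the leading coefficient is fixed, every lower coefficient is determined.

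\textbf{Step 2: the right-hand side.} Put $f_t(x)=x^3+3(4t-5)x+2(2t^2-14t+11)$. By \cref{rem:elliptic}, $-\sum_x\Leg{f_t(x)}p\equiv[x^{p-1}]f_t(x)^{\frac{p-1}2}$, and this is a polynomial $H(t)\in\F_p[t]$.

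A multinomial expansion shows that $H$ is a sum of terms $A^iB^j$ with $3m+i=p-1-2j$ (where $m$ is the exponent of $x^3$) and $m+i+j=\frac{p-1}2$. Here $A=3(4t-5)$ has degree $1$ and $B=2(2t^2-14t+11)$ has degree $2$. Eliminating $m$ bounds $i+2j$ by $r$, so $\deg H\le r$.

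Next I would show that $H$ satisfies the same Legendre equation. The Hasse invariant of a one-parameter family of Weierstrass curves is annihilated mod $p$ by the Picard--Fuchs operator of the family. Concretely, I would derive this from the identity $\frac{d}{dt}[x^{p-1}]f_t^{\frac{p-1}2}=[x^{p-1}]\frac{p-1}2f_t^{\frac{p-3}2}\partial_tf_t$ and its second derivative, reducing via exact derivatives $\partial_x(x^af_t^b)$, which have zero $x^{p-1}$-coefficient. An alternative is to note that $f_t$ is a twist of the Hesse-type family with a $3$-torsion point and parameter $\lambda=\frac{1-t}2$, whose Picard--Fuchs equation is the hypergeometric equation with parameters $(\frac13,\frac23;1)$. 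That equation becomes Legendre's equation in $t$.

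\textbf{Step 3: normalization at $t=1$.} At $t=1$ we have $f_1(x)=x^3-3x-2=(x+1)^2(x-2)$. Hence
\[
\sum_x\Leg{f_1(x)}p=\sum_{x\ne-1}\Leg{x-2}p=-\Leg{-3}p=-\Leg p3 .
\]
So the right-hand side at $t=1$ equals $\Leg p3^2=1=P_r(1)$. If $H$ is not identically zero it is a nonzero multiple of $P_r$, and this evaluation forces the multiple to be $1$; if $H$ were zero, the value at $t=1$ would be $0\ne1$, so that case cannot occur. This gives the congruence.

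\textbf{Main obstacle.} The hardest part is Step 2, the Picard--Fuchs verification. First I would check it directly by the exact-derivative reduction in $\F_p[t][x]$. If that becomes too messy, I would instead compare the coefficient recursions: expand $H$ in powers of $\frac{t-1}2$ and match it termwise with $\binom{-1/3}k\binom{-2/3}k$.
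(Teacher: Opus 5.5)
The paper does not prove this statement. It quotes it from Z.-H.~Sun, then combines it with a degree bound and interpolation over $\F_p$ to obtain $P_r=\Leg p3\mathcal H_p^{(3)}$ in \cref{cor:hasse1}. Your route runs the other way: you prove that polynomial identity directly and then specialize, which makes the argument self-contained.

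Steps 1 and 3 are correct, up to two small repairs.
\begin{itemize}
\item In Step 2 the constant term $B$ carries no power of $x$, so the constraint is $3m+i=p-1$, not $p-1-2j$. Then $j=2m-\frac{p-1}{2}$ and $\deg_t=i+2j=m\le r$.
\item In Step 1 the factor is $(k-r)(k+r+1)$, so you also need $k+r+1\le 2r+1<p$.
\end{itemize}
A cleaner version of Step 1 uses $z=\frac{1-t}{2}$. The equation becomes $z(1-z)y''+(1-2z)y'+r(r+1)y=0$, with recursion $(k+1)^2b_{k+1}=(k-r)(k+r+1)b_k$. So any solution of degree $<p$ is determined by its value at $t=1$, which is exactly what Step 3 computes.

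\textbf{The gap is Step 2.} It carries all of the content but is only announced. The principle that the Hasse invariant is killed mod $p$ by the Picard--Fuchs operator is usable only once you know the operator for this particular model $y^2=f_t(x)$ and the form $dx/y$. Your Hesse-family alternative does not supply this. Changing the Weierstrass model ($x\mapsto u^2x$) or twisting multiplies $dx/y$ by a function of $t$ and conjugates the operator by it. You would still have to prove that no such factor occurs here.

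\textbf{How to close it.} The exact-derivative route works, with a short explicit certificate. Let
\[
L=(1-t^2)\partial_t^2-2t\partial_t-\tfrac29,\qquad G=\tfrac49\bigl(x^4+3(2t-7)x^2-2(t^2+11t-17)x+6(t+1)^2\bigr).
\]
Recall $\partial_tf_t=12x+8t-28$ and $\partial_t^2f_t=8$. Comparing coefficients of $x^0,\dots,x^6$ gives
\[
 \tfrac34(1-t^2)(\partial_tf_t)^2-4(1-t^2)f_t+t\,f_t\,\partial_tf_t-\tfrac29f_t^2
 =f_t\,\partial_xG-\tfrac32\,G\,\partial_xf_t
\]
in $\Z[\tfrac16][t,x]$. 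After dividing by $f_t^{5/2}$, this says $L\bigl(f_t^{-1/2}\bigr)=\partial_x\bigl(G\,f_t^{-3/2}\bigr)$.

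Now reduce mod $p$. With $m=\frac{p-1}{2}\equiv-\frac12\pmod p$ one has $m(m-1)\equiv\frac34$, $8m\equiv-4$ and $-2m\equiv1$. Also $r(r+1)\equiv-\frac29$ by your Step 1, so $L$ reduces to the Legendre operator with parameter $r$. Multiplying the reduced identity by $f_t^{m-2}$ gives
\[
L\bigl(f_t^{m}\bigr)=\partial_x\bigl(G\,f_t^{m-1}\bigr)\quad\text{in }\F_p[t][x]\quad(p\ge5).
\]
Apply $[x^{p-1}]$: it commutes with $L$ and kills every $x$-derivative in characteristic $p$, so $LH=0$.

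With this identity inserted, your argument is a complete proof of the statement. It also yields \cref{cor:hasse1} without appealing to the cited congruence.
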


\begin{corollary}\label[corollary]{cor:hasse1}
    Denote $ g_T(x)=x^3+3(4T-5)x+2(2T^2-14T+11)$ and  $ \mathcal H_p^{(3)}(T)=[x^{p-1}]g_T(x)^{\frac{p-1}{2}}$, then
    \begin{equation}\label{eq:third-polynomial}
 P_r(T)=\Leg p3\mathcal H_p^{(3)}(T)
 \quad\text{in }\F_p[T].
\end{equation}
\end{corollary}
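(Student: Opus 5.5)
The plan is to upgrade the congruence of \cref{thm:legendre1}, which holds for each $t\in\Z_{(p)}$, to an identity of polynomials in $\F_p[T]$. The comparison is by degree counting plus interpolation at all $p$ points of $\F_p$.

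\emph{Step 1: pointwise identity.} For each $t\in\F_p$, lift to $t\in\Z$. Since $P_r$ has rational coefficients whose denominators are powers of $2$ (products of $\binom{-1/2}{\ell}$-type terms), and $p>3$, the value $P_r(t)\bmod p$ is the value of the reduced polynomial $P_r(T)\in\F_p[T]$ at $t$. By \cref{rem:elliptic}(1) applied with $q=p$ and $f=g_t$, we have
\[
-\sum_{x\in\F_p}\Leg{g_t(x)}p\equiv\operatorname{Ha}_p=\mathcal H_p^{(3)}(t)\pmod p.
\]
This congruence is the elementary identity $\sum_x f(x)^{\frac{p-1}{2}}\equiv-[x^{p-1}]f^{\frac{p-1}{2}}$, coming from the power-sum identity, and needs no nondegeneracy of the curve. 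Substituting into \cref{thm:legendre1} gives $P_r(t)=\Leg p3\mathcal H_p^{(3)}(t)$ for every $t\in\F_p$.

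\emph{Step 2: degrees.} $P_r$ has degree $r=\lfloor p/3\rfloor<p$ in $T$. For $\mathcal H_p^{(3)}(T)$, expand $g_T(x)^{\frac{p-1}{2}}$ as a sum of monomials $x^{3i}\,(3(4T-5)x)^{j}\,(2(2T^2-14T+11))^{k}$ with $i+j+k=\frac{p-1}{2}$. The coefficient of $x^{p-1}$ requires $3i+j=p-1$, and its $T$-degree is at most $j+2k$. Using $k=\frac{p-1}{2}-i-j$ and $i=(p-1-j)/3$, one finds
\[
j+2k=p-1-2i-j=\frac{p-1-j}{3}\le\frac{p-1}{3},
\]
so $\deg\mathcal H_p^{(3)}\le\lfloor (p-1)/3\rfloor\le r<p$.

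\emph{Step 3: conclusion.} The difference $P_r(T)-\Leg p3\mathcal H_p^{(3)}(T)$ has degree less than $p$ and vanishes at all $p$ elements of $\F_p$, hence is the zero polynomial. This proves \cref{eq:third-polynomial}.

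The main obstacle I expect is the careful justification of Step 1. One must check that reduction of $P_r$ modulo $p$ commutes with evaluation, which holds since the coefficients lie in $\Z_{(p)}$ for odd $p$. One must also ensure the character-sum-to-Hasse-coefficient congruence is used in its elementary form, so that it remains valid at the finitely many $t$ where $g_t$ is singular. The degree bound in Step 2 is routine, but it is exactly what makes interpolation over $\F_p$ alone sufficient.
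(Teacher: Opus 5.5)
Your proposal is correct and follows essentially the same route as the paper. You establish the pointwise identity from Z.-H.\ Sun's theorem by identifying $-\sum_{x}\Leg{g_t(x)}{p}$ with $\mathcal H_p^{(3)}(t)$ via the power-sum identity, which the paper justifies by $\deg_x g_t^{\frac{p-1}{2}}<2(p-1)$. You then use the same $(i,j,k)$ monomial count to get $\deg_T\mathcal H_p^{(3)}\le r<p$ (your $j+2k=\frac{p-1-j}{3}$ is exactly the paper's $j+2k=i$) and finish by interpolation at the $p$ points of $\F_p$; your remark that $P_r$ has coefficients in $\Z[\frac12]$, so that reduction mod $p$ commutes with evaluation, makes explicit a point the paper leaves implicit.
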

\begin{proof}
For every $u\in\F_p$, the preceding theorem says
\begin{equation}\label{eq:third-sun-pointwise}
 P_r(u)=-\Leg p3\sum_{x\in\F_p}\Leg{g_u(x)}p= \Leg p3 \mathcal H_p^{(3)}(u)
 \quad\text{in }\F_p,
\end{equation}
 since $\deg_x g_u(x)^{\frac{p-1}{2}}=\frac{3(p-1)}{2}<2(p-1)$.
A monomial contributing to $x^{p-1}$ in $g_T(x)^{\frac{p-1}{2}}$ arises from choosing $i, j, k$ factors from the cubic, linear, and constant terms, respectively. Thus, we have $i+j+k=\frac{p-1}{2}$ and $3i+j=p-1$. The degree of this monomial in $T$ is at most $j+2k=i$, and the condition $j\ge0$ implies $i\le\lfloor\frac{p-1}{3}\rfloor=r$. Hence, $\deg_T\mathcal H_p^{(3)}\le r<p$. Both sides of \cref{eq:third-sun-pointwise} are polynomials in $T$ of degree at most $r < p$. Since they agree on all $p$ elements of $\F_p$, their difference must be identically the zero polynomial in $\F_p[T]$, as desired.
\end{proof}

Consider the curve
\begin{equation}\label{eq:third-curve}
 E_T^{(3)}:\quad
 y^2=x^3+3(4T-5)x+2(2T^2-14T+11).
\end{equation}
If $T^2=\frac{1}{2}$, then
\begin{align*}
 &\Delta(E_T^{(3)})=-16\left\{4\left[3(4T-5)\right]^3+27\left[2(2T^2-14T+11)\right]^2\right\}=1728(4T+3)\ne0,\\
&j(E_T^{(3)})=1728\frac{4\left[3(4T-5)\right]^3}{4\left[3(4T-5)\right]^3+27\left[2(2T^2-14T+11)\right]^2}=2417472-3414528T,
\end{align*}
and $j(E_T^{(3)})$ is exactly a root of the Hilbert class polynomial (see \cref{lst:hilbert-class-polynomials})
\begin{equation*}
 H_{-24}(Y)=Y^2-4834944Y+14670139392.
\end{equation*}
Hence, $E_T^{(3)}$ is an elliptic curve over $\Q(T)$ with complex multiplication  by the order of discriminant $-24$ in $\Q(\sqrt{-6})$. Let $K=\Q(\sqrt2)$, choose $T_0=\pm\sqrt2/2$, and let $\mathfrak p$ be a prime ideal of $\mathcal{O}_K$ lying above $p$ such that the residue map sends $T_0$ to $t$. For $p>3$, the coefficients of $E_{T_0}^{(3)}$ are $\mathfrak p$-integral, while the discriminant $\Delta(E_{T_0}^{(3)})=1728(3\pm2\sqrt2)$ is a $\mathfrak p$-adic unit since its norm is $N_{K/\Q}(3\pm2\sqrt2)=1$. Hence, $E_{T_0}^{(3)}$ has good reduction at $\mathfrak p$, and its reduction modulo $\mathfrak p$ is precisely the curve $E_t^{(3)}$.

\begin{proof}[\textbf{Proof of \cref{thm:main}(b)}]

For $p\equiv13,17,19,23\pmod {24}$, we have $\Leg{-6}p=-1$. Therefore, $p$ is inert in $\Q(\sqrt{-6})$, and by Deuring's theorem (\cref{thm:deuring}), the reduced elliptic curve $E_t^{(3)}$ of \cref{eq:third-curve} is supersingular. Its Hasse invariant $\mathcal H_p^{(3)}(t)$ consequently vanishes. Equations \eqref{eq:third-polynomial} and \eqref{eq:third-coeff} yield $a_r=0$. Finally, \cref{lem:spectrum}, with $d=2$, gives $a_{r'}=0$.

\begin{itemize}
    \item Suppose first that $p\equiv17,23\pmod {24}$.  Then $p\equiv2\pmod3$ and
\[
 r=\frac{p-2}{3},\qquad r'=p-1-r=\frac{2p-1}{3}.
\]
Both $r$ and $r'$ are odd and thus, $\vec w_r-\vec w_{r'}$ lies in the kernel of the truncated matrix. Hence, we conclude $\{2,2\}_p\equiv0\pmod p$.

\item Assume now that $p\equiv13,19\pmod {24}$, and put
$N=(p-1)/3$.  Then $N$ is even and $r=N,r'=2N$.  Let
$C_0=(\F_p^\times)^3,C_1,C_2$ be the cubic cosets, and set
\[
 S_i=\sum_{x\in C_i}\Leg{q(x)}p\in\Z\qquad(0\le i\le2).
\] 
Fix a primitive cube root $\omega\in\F_p$ and label the cosets such that $x^N=\omega^i$ for $x\in C_i$.

The preceding argument gives $a_N=a_{2N}=0$.  Moreover,
$a_0=\Leg2p=-1$ and $a_{3N}=1$ in $\F_p$ gives
$a_0+a_{3N}=0$. Hence in $\F_p$,
\[
\begin{pmatrix}1&1&1\\1&\omega&\omega^2\\
1&\omega^2&\omega\end{pmatrix}
\begin{pmatrix}
    S_0\\S_1\\S_2
\end{pmatrix}
=
\begin{pmatrix}
    \sum_{x\ne0}q(x)^{\frac{p-1}{2}}\\
    \sum_{x\ne0}q(x)^{\frac{p-1}{2}}x^N\\
    \sum_{x\ne0}q(x)^{\frac{p-1}{2}}x^{2N}
\end{pmatrix}
=
\begin{pmatrix}
    -a_0-a_{3N}\\
    -a_{2N}\\
    -a_N
\end{pmatrix}
=0.
\]
Since $\omega\ne1$, we obtain $S_i\equiv0\pmod p$ for $0\le i\le2$.  As
$|S_i|\le|C_i|=(p-1)/3<p$, it follows that $S_0=S_1=S_2=0$ in $\Z$.
Since $N$ is even, both $\pm1\in C_0$.   For any $2\le j\le p-2$,
\[
 \sum_{k\in C_1}\Leg{j^2+2jk+2k^2}p
 =\sum_{x\in jC_1^{-1}}\Leg{q(x)}p=0,
\]
since $jC_1^{-1}$ is just one of the  cubic cosets. Therefore the indicator function  $v_k=\mathbf 1_{C_1}(k)$ restricted to $2\le k\le p-2$ provides a
nonzero kernel vector over $\Z$ for the truncated matrix, establishing that 
$\{2,2\}_p=0$.
\end{itemize}
\end{proof}
\begin{remark}
    In particular, $\{2,2\}_{23}=-9984540672=-2^{21}\cdot 3^2\cdot 23^2$  which verifies the difference between the two conclusions.
\end{remark}

\subsection{The one-quarter coefficient}

The proofs of cases (f), (g), and (h) of \cref{thm:main} use the coefficient
with index $\lfloor p/4\rfloor$ in \cref{eq:q-expansion}.  Let $p>3, p\nmid d$ and set
\[
 q(X)=X^2+cX+d,\qquad
 r=\left\lfloor\frac{p}{4}\right\rfloor,\qquad
 r'=p-1-r,
\]
We shall prove that $a_r=a_{r'}=0$ for the three pairs $(c,d)$ below.  Choose
$\alpha\in\overline{\F}_p$ with $\alpha^2=d$ and set $t=-c/(2\alpha)$, so that $t^2=c^2/(4d)$. By an argument analogous to the one preceding \cref{thm:legendre1}, we have
\begin{equation}\label{eq:quarter-coeff}
 a_r=[X^r]q(X)^{\frac{p-1}{2}}
 =d^{\frac{p-1}{2}}\alpha^{-r}
 [z^r](1-2tz+z^2)^{-\frac{1}{2}}
 =d^{\frac{p-1}{2}}\alpha^{-r}P_r(t)
 \quad\text{in }\overline{\F}_p.
\end{equation}

\begin{theorem}[{\cite[Theorem 2.1]{SunLegendreII}}]
Let $p > 3$ be a prime and $t \in \Z_{(p)}$, the set of rational numbers with denominators coprime to $p$. Then
   \[
   P_{\lfloor \frac{p}{4}\rfloor}(t) \equiv -\left(\frac{6}{p}\right) \sum_{x=0}^{p-1} \left(\frac{x^3 - \frac{3}{2}(3t+5)x + 9t + 7}{p}\right) \pmod p. 
   \] 
\end{theorem}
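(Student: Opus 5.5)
The plan is to prove the congruence as an identity of polynomials in $T$ over $\F_p$, in the style of \cref{cor:hasse1}. I would show that both sides, viewed as polynomials in $T$ of degree $<p$, satisfy the same second-order linear differential equation over $\F_p$ and agree at one point. It then suffices to prove the identity for $t$ an indeterminate over $\F_p$.

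\textbf{Step 1 (right-hand side as a Hasse coefficient).} Put $g_T(x)=x^3-\tfrac32(3T+5)x+9T+7$ and $\mathcal H(T)=[x^{p-1}]g_T(x)^{\frac{p-1}{2}}$. By Euler's criterion and the power-sum identity used in \cref{lem:spectrum}, $-\sum_{x}\Leg{g_t(x)}p=\mathcal H(t)$ in $\F_p$. The monomial count from the proof of \cref{cor:hasse1} applies here: choosing $i,j,k$ cubic, linear and constant factors forces $3i+j=p-1$ and gives $T$-degree at most $j+k=\tfrac{p-1}{2}-i$. I will check this is at most $r$, or at least $<p$. The target then becomes $P_r(T)=\Leg6p\mathcal H(T)$ in $\F_p[T]$.

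\textbf{Step 2 (the Legendre equation).} $P_r$ satisfies $(1-T^2)y''-2Ty'+r(r+1)y=0$. Modulo $p$ we have $r\equiv-\tfrac14$ if $p\equiv1\pmod4$ and $r\equiv-\tfrac34$ if $p\equiv3\pmod4$. In both cases $r(r+1)\equiv-\tfrac3{16}$, so the equation is the same for both residue classes. I would show $\mathcal H$ satisfies the same equation by a Picard--Fuchs computation. Differentiate $g_T^{\frac{p-1}{2}}$ twice in $T$. The result is a combination of $g_T^{\frac{p-1}{2}-m}(\partial_Tg_T)^{m'}$ terms. The aim is to find polynomials $A(x,T),B(x,T)$ with
\[
\Bigl[(1-T^2)\partial_T^2-2T\partial_T-\tfrac3{16}\Bigr]g_T^{\frac{p-1}{2}}=\partial_x\bigl(A\,g_T^{\frac{p-1}{2}-1}\bigr)+p\cdot B .
\]
One then uses $[x^{p-1}]\partial_x(\cdot)=0$ in characteristic $p$. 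Since $\partial_Tg_T$ is linear in $x$, the ansatz $A=\lambda(T)x+\mu(T)$ should suffice. Matching coefficients is a finite linear algebra computation. This is the step I expect to be the main obstacle: getting the exact-derivative identity right, with the prefactor $\tfrac{p-1}{2}\equiv-\tfrac12$, is where the specific shape $-\tfrac32(3T+5)$, $9T+7$ must be used.

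\textbf{Step 3 (uniqueness and normalization).} Expand a solution around $T=1$ as $y=\sum_k b_k\bigl(\tfrac{1-T}{2}\bigr)^k$. The equation gives the recurrence $(k+1)^2b_{k+1}=(k+\tfrac14)(k+\tfrac34)b_k$. For $k+1<p$ the factor $(k+1)^2$ is invertible, so every polynomial solution of degree $<p$ is determined by $b_0=y(1)$. Both $P_r$ and $\Leg6p\mathcal H$ have degree $<p$ by Step 1, so it remains to compare their values at $T=1$.

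We have $P_r(1)=1$. At $T=1$, $g_1(x)=x^3-12x+16=(x-2)^2(x+4)$, so
\[
\sum_x\Leg{g_1(x)}p=\sum_{x\ne2}\Leg{x+4}p=-\Leg6p,
\]
which gives $\mathcal H(1)=\Leg6p$ and hence $\Leg6p\mathcal H(1)=1$. The two sides therefore agree as polynomials, and specializing at $t\in\Z_{(p)}$ gives the stated congruence.

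As a fallback if Step 2 proves messy, I would compare explicit expansions directly. Using $\binom rk\binom{r+k}k\equiv\binom{-1/4}{k}\binom{-3/4}{k}(-1)^k\cdot(\pm1)$, one gets $P_r(T)\equiv\sum_k\binom{-1/4}{k}\binom{-3/4}{k}\bigl(\tfrac{1-T}{2}\bigr)^k$, truncated. One would then compute $\mathcal H$ after the substitution $T=1-2u$ and match coefficients term by term.
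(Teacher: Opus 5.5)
\emph{Approach.} The paper does not prove this statement. It quotes it from Z.-H. Sun, then upgrades the pointwise congruence to the identity $P_r(T)=\Leg6p\mathcal H_p^{(4)}(T)$ in $\F_p[T]$ by interpolation, using $\deg_T<p$. Your route runs the other way. You prove the polynomial identity directly, by showing that the Hasse invariant is annihilated mod $p$ by the Legendre operator $L=(1-T^2)\partial_T^2-2T\partial_T-\frac3{16}$, and then you specialize. Your computation $r(r+1)\equiv-\frac3{16}$ in both classes mod $4$ is correct.

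Steps 1 and 3 are sound:
\begin{itemize}
\item $\deg_T\mathcal H\le r$;
\item the recurrence $(k+1)^2b_{k+1}=(k+\frac14)(k+\frac34)b_k$ determines a solution of degree $<p$ from $b_0$;
\item $g_1=(x-2)^2(x+4)$ gives $\Leg6p\mathcal H(1)=1=P_r(1)$.
\end{itemize}
Compared with the citation, this gives a self-contained, mechanical proof. The same template (one exact form plus one normalization point) could plausibly handle the one-third and one-sixth families of \cref{rem:gene} too, without a separate Sun-type congruence for each index.

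\emph{The gap in Step 2.} Your linear ansatz for $A$ fails. Put $g=g_T(x)$ and $n=\frac{p-1}{2}\equiv-\frac12$. Then
\[
L(g^n)=g^{n-2}\bigl[\tfrac34(1-T^2)(\partial_Tg)^2+Tg\,\partial_Tg-\tfrac3{16}g^2\bigr],
\]
and this bracket has $x$-degree $6$. On the other side,
\[
\partial_x(Ag^{n-1})=g^{n-2}\bigl[(\partial_xA)g-\tfrac32A\,\partial_xg\bigr],
\]
whose bracket has degree $\deg_xA+2$. So a linear $A$ cannot work, and you need $\deg_xA=4$. Solving the resulting linear system gives
\[
A=\tfrac38x^4-\tfrac9{16}(T+7)x^2+\tfrac34(3T+5)x+\tfrac94(1-T).
\]
With this $A$, the identity
\[
\tfrac34(1-T^2)(\partial_Tg)^2+T\,g\,\partial_Tg-\tfrac3{16}g^2=(\partial_xA)\,g-\tfrac32A\,\partial_xg
\]
holds in $\Z[\frac12][x,T]$. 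To check it, note that both sides have $T$-degree at most $2$ and agree at $T=-1,0,1$.

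Hence $L(g^n)=\partial_x(Ag^{n-1})$ holds exactly in $\F_p[x,T]$ for $p\ge5$, and no $p\cdot B$ term is needed. Taking $[x^{p-1}]$ then gives $L\mathcal H=0$. With this correction your proof is complete.
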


\begin{corollary}
 Denote $ f_T(x)=x^3-\frac{3}{2}(3T+5)x+9T+7$ and
 $\mathcal H_p^{(4)}(T)=[x^{p-1}]f_T(x)^{\frac{p-1}{2}}$.  Then
\begin{equation}\label{eq:quarter-polynomial}
 P_r(T)=\Leg6p\mathcal H_p^{(4)}(T)
 \quad\text{in }\F_p[T].
\end{equation}
\end{corollary}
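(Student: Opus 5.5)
This corollary is proved in exactly the same way as \cref{cor:hasse1}: a pointwise identity over $\F_p$ is upgraded to a polynomial identity in $\F_p[T]$ by a degree count.

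\textbf{Pointwise identity.} Fix $u\in\F_p$ and lift it to an integer $t$. Since $p>3$, the coefficients $-\tfrac32(3t+5)$ and $9t+7$ lie in $\Z_{(p)}$, so the preceding theorem applies and gives
\[
 P_r(u)=-\Leg6p\sum_{x\in\F_p}\Leg{f_u(x)}p\quad\text{in }\F_p .
\]
By Euler's criterion, $\Leg{f_u(x)}p=f_u(x)^{\frac{p-1}{2}}$ in $\F_p$. We then sum over $x\in\F_p$ using $\sum_{x\in\F_p}x^e=-1$ if $e>0$ and $p-1\mid e$, and $=0$ otherwise (including $e=0$, since $p\equiv 0$). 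The polynomial $f_u(x)^{\frac{p-1}{2}}$ has degree $\frac{3(p-1)}2<2(p-1)$, so the only exponent $e>0$ with $p-1\mid e$ that occurs is $e=p-1$. Hence
\[
 \sum_{x\in\F_p}f_u(x)^{\frac{p-1}{2}}=-\mathcal H_p^{(4)}(u),
\]
and therefore $P_r(u)=\Leg6p\mathcal H_p^{(4)}(u)$ for every $u\in\F_p$.

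\textbf{Degree bound.} This is the only new point. A monomial contributing to $x^{p-1}$ in $f_T(x)^{\frac{p-1}{2}}$ comes from choosing $i$ cubic factors, $j$ linear factors and $k$ constant factors, with
\[
 i+j+k=\tfrac{p-1}{2},\qquad 3i+j=p-1 .
\]
Here the linear and constant coefficients are both linear in $T$, so the $T$-degree of such a monomial is at most $j+k$. From the two constraints, $j=p-1-3i$ and $k=2i-\frac{p-1}{2}$, so
\[
 j+k=\tfrac{p-1}{2}-i .
\]
Positivity of $k$ forces $i\ge\frac{p-1}{4}$, i.e.\ $i\ge\lceil\frac{p-1}{4}\rceil$. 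Hence
\[
 \deg_T\mathcal H_p^{(4)}\le \tfrac{p-1}{2}-\left\lceil\tfrac{p-1}{4}\right\rceil=\left\lfloor\tfrac{p-1}{4}\right\rfloor=r .
\]
On the other side, $P_r$ has degree $r$ as an integer polynomial whose coefficients have denominators that are powers of $2$, so its reduction is a well-defined polynomial of degree at most $r<p$ in $\F_p[T]$.

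\textbf{Conclusion.} The difference $P_r(T)-\Leg6p\mathcal H_p^{(4)}(T)$ has degree at most $r<p$ and vanishes at all $p$ points of $\F_p$, so it is the zero polynomial. The main thing to check is the degree bound: here, unlike the one-third case, both non-cubic coefficients depend on $T$, so the bound must come from $j+k$ rather than $j+2k$.
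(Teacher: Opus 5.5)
Your proof is correct and follows the paper's route, which is literally "analogous to \cref{cor:hasse1}": a pointwise identity from Z.-H. Sun's theorem via Euler's criterion and the power-sum identity, then a degree count in $T$ to upgrade it to an identity in $\F_p[T]$. Your refined bound via $j+k=\frac{p-1}{2}-i$ and $k\ge 0$ is right, though the crude bound $\deg_T\mathcal H_p^{(4)}\le\frac{p-1}{2}<p$ (each factor of $f_T$ has $T$-degree $1$) already suffices.
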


\begin{proof}
The proof is completely analogous to that of \cref{cor:hasse1}.
\end{proof}
Consider the elliptic curve
\begin{equation*}
 E_T^{(4)}:\quad y^2=x^3-\frac{3}{2}(3T+5)x+9T+7.
\end{equation*}
Its discriminant and $j$-invariant are
\begin{align*}
 \Delta(E_T^{(4)})
 &=-16\left\{4\left[-\frac32(3T+5)\right]^3
 +27(9T+7)^2\right\}=5832(T-1)^2(T+1),\\
j(E_T^{(4)})
 &=1728\frac{4\left[-\frac32(3T+5)\right]^3}
 {4\left[-\frac32(3T+5)\right]^3+27(9T+7)^2}
=64\frac{(3T+5)^3}{(T-1)^2(T+1)}.
\end{align*}
The values needed in this subsection are displayed in \cref{tab:quarter-cm} (see \cref{lst:quarter-cm}).
\begin{table}[ht]
\centering
\caption{Data used for the one-quarter coefficient $a_{\left\lfloor\frac{p}{4}\right\rfloor}$.}
\label{tab:quarter-cm}
\small
\renewcommand{\arraystretch}{1.15}
\resizebox{\textwidth}{!}{%
\begin{tabular}{@{}c c c c c c c@{}}
\toprule
\multicolumn{7}{c}{$E_t^{(4)}:\ y^2=x^3-\frac{3}{2}(3t+5)x+9t+7$}\\
\midrule
$(c,d)$
& parameter $t^2=\frac{c^2}{4d}$
& $j(E_t^{(4)})$
& $\Delta_t$
& $D$
& $H_{D}(Y)$
& CM field
\\
\midrule
$(5,5)$
& $t^2=\frac{5}{4}$
& $632000+565760t$
& $1458(t-1)$
& $-20$
& $Y^2-1264000Y-681472000$
& $\Q(\sqrt{-5})$
\\
$(10,9)$
& $t=-\frac{5}{3}$
& $0$
& $-2^{10}3^3$
& $-3$
& $Y$
& $\Q(\sqrt{-3})$
\\
$(8,18)$
& $t^2=\frac{8}{9}$
& $2417472+2560896t$
& $648(1-t)$
& $-24$
& $Y^2-4834944Y+14670139392$
& $\Q(\sqrt{-6})$
\\
\bottomrule
\end{tabular}%
}
\end{table}

Thus, in these cases $E_T^{(4)}$ admits CM by the corresponding order. To verify good reduction, observe that for the two quadratic parameter cases, the norm
\[
N_{\Q(T)/\Q}(T\pm 1) = 1-T^2
\]
evaluates to $-1/4$ and $1/9$, respectively. Hence, for every prime $p>3$, both $T+1$ and $T-1$ are $\mathfrak p$-adic units at any prime ideal $\mathfrak p$ lying above $p$. Since the coefficients of $E_T^{(4)}$ are $\mathfrak p$-integral and the discriminant $\Delta(E_T^{(4)})=2^3 3^6(T-1)^2(T+1)$ is a $\mathfrak p$-adic unit, the curve exhibits good reduction at every prime ideal $\mathfrak p$ lying above $p>3$. For $T=-5/3$, the only prime factors occurring in the denominators of $T, T\pm1$ are $2$ and $3$. Therefore, the same conclusion holds for any $p>3$. By choosing a prime ideal above $p$ whose residue map sends $T$ to $t$, we see that the reduction of $E_T^{(4)}$ is exactly $E_t^{(4)}$.

\begin{proposition}
\label[proposition]{prop:quarter-zero}
For any row of \cref{tab:quarter-cm}, let $K$ be the CM field in the table and
let $p>3$ be inert in $K$.  Then
\[
 a_r=a_{p-1-r}=0.
\]

\end{proposition}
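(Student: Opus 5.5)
The plan mirrors the proof of \cref{thm:main}(b). I first show $a_r=0$, and then obtain $a_{p-1-r}=0$ from the reciprocity in \cref{lem:spectrum}(2), which gives $a_{p-1-r}=d^{r-\frac{p-1}{2}}a_r$. This applies because $p\nmid d$: we have $d=5,9,18$ and $p>3$; for $d=5$ the prime $p=5$ ramifies in $\Q(\sqrt{-5})$, so it is not inert and is excluded.

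To show $a_r=0$, I fix $\alpha$ with $\alpha^2=d$ and set $t=-c/(2\alpha)$. Then \cref{eq:quarter-coeff} gives
\[
a_r=d^{\frac{p-1}{2}}\alpha^{-r}P_r(t),
\]
so it suffices to prove $P_r(t)=0$ in $\overline{\F}_p$. By \cref{eq:quarter-polynomial}, which is a polynomial identity in $\F_p[T]$ and can therefore be evaluated at $t\in\F_{p^2}$, we get $P_r(t)=\Leg6p\mathcal H_p^{(4)}(t)$. Hence it suffices to show that the Hasse-type coefficient $[x^{p-1}]f_t(x)^{\frac{p-1}{2}}$ vanishes.

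I will use the lift $T_0\in\Q(\sqrt5)$, $\Q$, $\Q(\sqrt2)$ with $T_0^2=c^2/(4d)$, and choose $\mathfrak p\mid p$ so that $T_0\bmod\mathfrak p=t$. The discussion after \cref{tab:quarter-cm} shows that $E^{(4)}_{T_0}$ has good reduction at $\mathfrak p$ and that this reduction is $E^{(4)}_t$. The table also shows that $j(E^{(4)}_{T_0})$ is a root of $H_D$, so $E^{(4)}_{T_0}$ has CM by an order in $K$. Since $p$ is inert in $K$, Deuring's theorem (\cref{thm:deuring}) shows that $E^{(4)}_t$, which is defined over $\F_{p^2}$ or $\F_p$, is supersingular.

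The main obstacle is passing from supersingularity to the vanishing of $\mathcal H_p^{(4)}(t)=[x^{p-1}]f_t(x)^{\frac{p-1}{2}}$ when $t\in\F_{p^2}\setminus\F_p$. \cref{rem:elliptic} gives $\operatorname{Ha}_p(E)=0\iff E$ supersingular for $E/\F_q$ with any $q=p^n$. The point to check is that $\operatorname{Ha}_p$, taken with exponent $\frac{p-1}{2}$, is indeed the Hasse invariant of a curve over $\F_{p^2}$; this is the standard Deuring criterion, which is insensitive to the base field. I would cite it in that form. If needed, I would instead argue via $\operatorname{Ha}_{p^2}=\operatorname{Ha}_p\cdot\operatorname{Ha}_p^{(p)}$, the standard Frobenius factorization of the Hasse invariant. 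Either way $\mathcal H_p^{(4)}(t)=0$, hence $P_r(t)=0$ and $a_r=0$. The row $(10,9)$ has $t=-5/3\in\F_p$, so there the direct statement applies with no such subtlety.
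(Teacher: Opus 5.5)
Your proposal is correct and takes the same route as the paper. Deuring's theorem gives supersingularity of the reduction $E_t^{(4)}$, so $\mathcal H_p^{(4)}(t)=0$; then $P_r(t)=0$ and $a_r=0$ follow from \cref{eq:quarter-polynomial} and \cref{eq:quarter-coeff}, and $a_{p-1-r}=0$ from \cref{lem:spectrum}. The $\F_{p^2}$ subtlety you flag is already covered by \cref{rem:elliptic}(1), which is stated for curves over any $\F_q$ (Silverman V.4.1), and your fallback via the factorization $\operatorname{Ha}_{p^2}=\operatorname{Ha}_p^{1+p}$ is also valid.
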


\begin{proof}
By Deuring's theorem (\cref{thm:deuring}), \cref{eq:quarter-polynomial}, and \cref{eq:quarter-coeff}, we conclude that $a_r=0$. Consequently, \cref{lem:spectrum} implies $a_{p-1-r}=0$.
\end{proof}

Observe that for $r'=p-1-r$, we always have $r\equiv r'\pmod2$ for every odd prime $p$. Consequently, \cref{prop:boundary} is always applicable whenever the hypothesis of \cref{prop:quarter-zero} is satisfied.

\begin{remark}\label[remark]{rem:quartic-coset-vector}
    When $p\equiv1\pmod4$, put $N=\frac{p-1}{4}$, choose a primitive root
$g$, and write $C_i=g^i(\F_p^\times)^4$.  Let
$\iota=g^N$, so that $\iota^2=-1$, and define $\vec v=(v_k)_{k\in\F_p}$
by
\begin{equation*}
 v_0=0,\qquad
 v_k=
 \begin{cases}
  1,&k\in C_1,\\
  -1,&k\in C_3,\\
  0,&k\in C_0\cup C_2
 \end{cases}
 \quad(k\in\F_p^\times).
\end{equation*}
For $k\in C_i$ one has $k^N=\iota^i$, and hence $ v_k=(2\iota)^{-1}\bigl(k^N-k^{3N}\bigr).$ Thus, the restriction of $\vec v$ to $\F_p^\times$ is a nonzero scalar multiple of $\vec w_N-\vec w_{3N}$ over $\F_p$. If $a_N=a_{3N}=0$, then every nonzero row product with $\vec v$ is divisible by $p$. Each such product has at most $2N=\frac{p-1}{2}$ nonzero summands, each taking the value $1$ or $-1$. Since $\frac{p-1}{2} < p$, divisibility by $p$ forces the product to be identically zero in $\Z$. The product corresponding to the zeroth row evaluates to zero simply because $|C_1|=|C_3|$. Finally, since $1\in C_0$ and $-1\in C_0\cup C_2$, we have $v_0=v_1=v_{-1}=0$. We therefore conclude that $\vec v$ is a kernel vector over $\Z$ of the full matrix, and its restriction constitutes a kernel vector over $\Z$ of the truncated matrix.
\end{remark}

\begin{proof}[\textbf{Proof of \cref{thm:main}(f)}]
If $n$ is not squarefree, \cref{lem:composite}(i) establishes  the result.  Suppose
that $n$ is squarefree. 
\begin{itemize}
    \item Since $n\equiv 13,17\pmod {20}$ is a sum of two squares, every prime
divisor of $n$ is $1\pmod4$. Note
$\Leg{-5}n=-1$,  multiplicativity therefore
gives a prime $p\mid n$ with $\Leg{-5}p=-1$.  This prime is inert in
$\Q(\sqrt{-5})$ and satisfies $p\equiv1\pmod4$.  The $(5,5)$ row of
\cref{tab:quarter-cm} and \cref{rem:quartic-coset-vector} above give a kernel vector over
$\Z$ of  $\mathcal M_p^{\mathrm{trun}}(5,5)$.  If $n>p$, \cref{lem:composite}(ii)
transports this vector to the truncated matrix modulo $n$. If $n=p$, the
same vector already vanishes at $0,1,-1$.  Hence $\{5,5\}_n=0$.
    \item Suppose that $n\equiv11,19\pmod {20}$, then
$\Leg{-5}n=-1$, so there is a prime $p\mid n$ with
$\Leg{-5}p=-1$.  By \cref{prop:quarter-zero},
$a_r=a_{r'}=0$. If $p\equiv1\pmod4$, then \cref{rem:quartic-coset-vector} gives a kernel vector over
$\Z$ of  $\mathcal M_p^{\mathrm{trun}}(5,5)$.  Thus $\{5,5\}_n=0$ if $n=p$, and
\cref{lem:composite}(ii) gives $\{5,5\}_n=0$ if $n>p$.
If $p\equiv3\pmod4$, then $r\equiv r' \pmod 2$, hence
$\vec w_r-\vec w_{r'}$ vanishes at $1$ and $-1$, and
\cref{prop:boundary} gives
$\{5,5\}_p\equiv0\pmod p$ when $n=p$.  If $n>p$,
$\vec w_r-\vec w_{r'}$, extended by zero at the zero coordinate, is a kernel vector of  $\mathcal M_p^{\mathrm{trun}}(5,5)$ modulo $p$. Therefore
\cref{lem:composite}(ii) gives
$\{5,5\}_n\equiv0\pmod p$.

In all cases, some prime divisor $p$ of $n$ divides $\{5,5\}_n$.  Consequently $\Leg{\{5,5\}_n}n=0.$

\end{itemize}

\end{proof}

\begin{proof}[\textbf{Proof of \cref{thm:main}(g)}]
\begin{itemize}[leftmargin=2em]
\item Suppose that $n\equiv5\pmod {12}$ is a sum of two squares.  If $n$ is not squarefree, \cref{lem:composite}(i) applies.  Otherwise every prime divisor of $n$ is $1\pmod4$, while
$\Leg{-3}n=-1$ in this case.  Hence some prime $p\mid n$ is inert in $\Q(\sqrt{-3})$ and satisfies $p\equiv1\pmod4$.  The $(10,9)$
row of \cref{tab:quarter-cm} and \cref{rem:quartic-coset-vector} give
$\{10,9\}_n=0$.

\item Let $p\equiv11\pmod {12}$.  Since
$\Leg{-3}p=-1$, the prime $p$ is inert in $\Q(\sqrt{-3})$.
Hence \cref{prop:quarter-zero}, applied to $(c,d)=(10,9)$, implies that the
restriction of $\vec w_r-\vec w_{r'}$ is a nonzero kernel vector of
$\mathcal M^{\mathrm{trun}}_p(10,9)$ modulo $p$, since
$r\equiv r'\pmod2$.
Thus $ \{10,9\}_p\equiv0\pmod p$.

\end{itemize}
\end{proof}

\begin{proof}[\textbf{Proof of \cref{thm:main}(h)}]
\begin{itemize}[leftmargin=2em]
\item Suppose that $n\equiv13,17\pmod {24}$ is a sum of two squares.  If
$n$ is not squarefree, \cref{lem:composite}(i) gives the result.  Otherwise every prime divisor of $n$ is $1\pmod4$, and
$\Leg{-6}n=-1$ in this case.  Thus some prime $p\mid n$
is inert in $\Q(\sqrt{-6})$ and satisfies $p\equiv1\pmod4$.  The $(8,18)$
row of \cref{tab:quarter-cm} and \cref{rem:quartic-coset-vector} yield
$\{8,18\}_n=0$.

\item Let $p\equiv23\pmod {24}$. Since $r\equiv r'\pmod 2$, the vector $\vec w_r-\vec w_{r'}$ with 
\cref{prop:boundary} gives $\{8,18\}_p\equiv0\pmod p$.

\item Let $p\equiv19\pmod {24}$. The vectors $\vec w_r$ and $\vec w_{r'}$ have zero eigenvalue by \cref{prop:quarter-zero}, and both of their exponents are even. The constant vector also has zero eigenvalue because $a_0+a_{p-1}=\Leg{18}p+1=0$. Since these three linearly independent vectors take the same value at $1$ and $-1$, the evaluation map of \cref{prop:boundary} has rank one on their span. Thus, $\mathcal M_p^{\mathrm{trun}}(8,18)$ has nullity at least two over $\F_p$, which implies that $p^2\mid\{8,18\}_p$.
\end{itemize}
\end{proof}

\subsection{Low-degree coefficients and full nonzero-residue determinants}

The proofs of case (e) of \cref{thm:main} and \cref{thm:oldSUN}  rely on the coefficients of degrees two and three in \cref{eq:q-expansion}. Assuming $p\nmid d$, a direct extraction yields
\begin{align*}
 a_2&=[X^2]q(X)^{\frac{p-1}{2}}
 =\frac{p-1}{2}d^{\frac{p-3}{2}}
 +\binom{\frac{p-1}{2}}{2}c^2d^{\frac{p-5}{2}}=\frac{p-1}{2}d^{\frac{p-5}{2}}
 \left(d+\frac{p-3}{4}c^2\right),\\
 a_3&=[X^3]q(X)^{\frac{p-1}{2}}
 =\frac{(p-1)(p-3)}{4}cd^{\frac{p-5}{2}}
 +\binom{\frac{p-1}{2}}{3}c^3d^{\frac{p-7}{2}}=\frac{(p-1)(p-3)}{4}cd^{\frac{p-7}{2}}
 \left(d+\frac{p-5}{12}c^2\right).
\end{align*}
In $\F_p$, $4d=3c^2$ implies $a_2=0$, while $12d=5c^2$ implies $a_3=0$.
\cref{lem:spectrum} then gives $a_{p-3}=0$ and $a_{p-4}=0$,
respectively.  The primes dividing $d$ are treated separately below.

\begin{proof}[\textbf{Proof of \cref{thm:main}(e)}]
\begin{itemize}
\item For $(c,d)=(2,3)$ and a prime $p\ge7$, the exponents $2$ and $p-3$
are distinct even indices.  Hence \cref{prop:boundary} gives
\begin{equation}\label{eq:23-prime-p}
 p\mid \{2,3\}_p.
\end{equation}
For $p=5$ the truncated matrix is $\begin{pmatrix}1&-1\\-1&1\end{pmatrix}$,
and its determinant is zero.  This proves \cref{eq:23-prime-p} for every
prime $p>3$.

If $p\equiv5,7\pmod {12}$, then $\Leg3p=-1$, and therefore $\vec w_0$ has eigenvalue $-(a_0+a_{p-1})=-\left(\Leg3p+1\right)=0.$ For $p\ge7$, the three independent vectors
$\vec w_0,\vec w_2,\vec w_{p-3}$ lie in 
$\ker_{\F_p}\mathcal M_p^\times(2,3)$.  All take the value $1$
at both $1$ and $-1$.  The evaluation map of \cref{prop:boundary} therefore
has rank one on their span, so the truncated nullity is at least two. Thus 
\begin{equation*}
 p^2\mid\{2,3\}_p
 \qquad(p\equiv5,7\pmod {12},\ p\ge7).
\end{equation*}
The missing prime $p=5$ has determinant zero, as shown above.

\item For $(c,d)=(6,15)$ and every prime $p>7$, the identity $12d=5c^2$ gives zero eigenvalues at the distinct odd indices
$3$ and $p-4$.  Their difference vanishes at $1$ and $-1$, so $p\mid \{6,15\}_p.$
In general this is only a one-dimensional boundary-compatible kernel: for
example, direct calculation gives $\{6,15\}_11=77$.

\end{itemize}
\begin{itemize}
\item \emph{Composite moduli.}
By \cref{lem:composite}(i), a non-squarefree modulus makes the relevant
truncated determinant zero.  We may therefore assume throughout this item
that $n>3$ is squarefree and composite.  For $(2,3)$, let $ \nu_p(c,d)=\dim_{\F_p}\ker\mathcal M_p^{\mathrm{full}}(c,d).$
Part (iii) of \cref{lem:composite}, with the three deleted indices
$0,1,-1$, gives the uniform estimate
\begin{equation}\label{eq:global-nullity}
  \dim_{\F_p}\ker \mathcal M_n^{\mathrm{trun}}(c,d)\ge \nu_p(c,d)\frac{n}{p}-3.
\end{equation}

The following table gives the required lower bounds.  For an exceptional prime, the final column gives a full matrix and its kernel vector or vectors. All coordinates are indexed by $0,1,\ldots,p-1$.  Every entry of a displayed
matrix is a Legendre symbol modulo the prime in its row.  The bounds need not be exact.
\begin{equation*}
\renewcommand{\arraystretch}{1.35}
\begin{array}{c|c|c|c}
(c,d)&p&\nu_p(c,d)\ge&
\mathcal M_p^{\mathrm{full}}(c,d)\text{ and some kernels in } \F_p\\
\hline
(2,3)&3&1&
\scriptstyle\begin{gathered}
\left(\begin{smallmatrix}0&0&0\\1&0&-1\\1&-1&0\end{smallmatrix}\right),\,\, 
\vec\kappa_{2,3;3}=\left(\begin{smallmatrix}1\\1\\1\end{smallmatrix}\right)
\end{gathered}\\
(2,3)&5&1&
\scriptstyle\begin{gathered}
\left(\begin{smallmatrix}0&-1&-1&-1&-1\\1&1&-1&1&-1\\
1&1&1&-1&-1\\1&-1&-1&1&1\\1&-1&1&-1&1\end{smallmatrix}\right),\,\,
\vec\kappa_{2,3;5}=\left(\begin{smallmatrix}0\\1\\-1\\-
  1\\1\end{smallmatrix}\right)
\end{gathered}\\
(2,3)&p\ge7&2&\vec w_2,\ \vec w_{p-3}\\
(6,15)&3&2&
\scriptstyle\begin{gathered}
\left(\begin{smallmatrix}0&0&0\\1&1&1\\1&1&1\end{smallmatrix}\right),\,\,
\vec\kappa_{6,15;3}=\left(\begin{smallmatrix}2\\1\\0\end{smallmatrix}\right),\,\,
\vec\kappa'_{6,15;3}=\left(\begin{smallmatrix}2\\0\\1\end{smallmatrix}\right)
\end{gathered}\\
(6,15)&5&2&
\scriptstyle\begin{gathered}
\left(\begin{smallmatrix}0&0&0&0&0\\1&-1&-1&1&0\\
1&1&-1&0&-1\\1&-1&0&-1&1\\1&0&1&-1&-1\end{smallmatrix}\right),\,\,
\vec\kappa_{6,15;5}=\left(\begin{smallmatrix}4\\3\\2\\1\\0\end{smallmatrix}\right),\,\,
\vec\kappa'_{6,15;5}=\left(\begin{smallmatrix}2\\3\\4\\0\\1\end{smallmatrix}\right)
\end{gathered}\\
(6,15)&7&1&
\scriptstyle\begin{gathered}
\left(\begin{smallmatrix}0&1&1&1&1&1&1\\1&1&-1&0&-1&0&-1\\
1&-1&1&0&-1&-1&0\\1&0&0&1&-1&-1&-1\\
1&-1&-1&-1&1&0&0\\1&0&-1&-1&0&1&-1\\
1&-1&0&-1&0&-1&1\end{smallmatrix}\right),\,\,
\vec\kappa_{6,15;7}=\left(\begin{smallmatrix}0\\-1\\-1\\1\\-1\\1\\1\end{smallmatrix}\right)
\end{gathered}\\
(6,15)&p\ge11&2&\vec w_3,\ \vec w_{p-4}
\end{array}
\end{equation*}
Direct multiplication gives zero in each exceptional row, and the two
displayed vectors for $p=3,5$ in the $(6,15)$ rows are independent.  The
remaining rows follow from the two distinct power-function vectors.

Let $n>3$ be squarefree and composite. For $(2,3)$, if $p\mid n$ and $p\notin\{3,5\}$, then $\frac{n}{p}\ge3$, meaning the lower bound in \cref{eq:global-nullity} is at least $3$. For $p=3$, it is at least $5-3=2$, since $\frac{n}{3}\ge5$. For $p=5$, either $n=15$ or $\frac{n}{5}\ge7$. In the latter case, the lower bound is at least $7-3=4$. The exceptional determinant is exactly zero: with coordinates indexed by $2,3,\ldots,13$, the nonzero integer vector
\[
  \vec\kappa_{15}=(1,-1,0,0,1,1,0,1,0,0,0,0)
\]
is a kernel vector of the truncated matrix. Thus, $\{2,3\}_{15}=0$. For every other squarefree composite $n$ and every $p\mid n$, the nullity of the truncated matrix over $\F_p$ is at least two, which yields $p^2\mid\{2,3\}_n$ for every $p\mid n$.  Since $n$
is squarefree,
\begin{equation*}
 n^2\mid\{2,3\}_n
 \qquad(n>3\text{ odd and composite}).
\end{equation*}
For $(2,3)$, the prime argument proves $p^2\mid\{2,3\}_p$ for
$p\equiv5,7\pmod {12}$.  Since an odd integer is
congruent to one of $1,3,5,7,9,11$ modulo $12$, the only prime classes not
covered are $1$ and $11$, namely
$p\equiv\pm1\pmod {12}$.  Thus
\[
 n^2\mid\{2,3\}_n\qquad
 \bigl(n>3,\ n\not\equiv\pm1\pmod {12}\bigr),
\]
which is precisely the additional assertion in Conjecture 5.5(i). The composite argument gives the stated stronger result.  
\item For $(c,d)=(6,15)$, the cases $p=3,5$ give lower bounds at least $2\cdot5-3$
and $2\cdot3-3$, respectively.  If $p\ge11$, the bound is at least
$2\cdot3-3$.  For $p=7$, it is at least $5-3$ unless $n=21$. In that exceptional case, the vector
\[
  \vec\kappa_{21}=(1,-1,1,-1,-1,0,1,0,0,0,0,0,0,0,0,0,0,0),
\]
indexed by $2,3,\ldots,19$, is a kernel vector over $\Z$.  Hence
$p\mid\{6,15\}_n$ for every $p\mid n$, and squarefreeness gives
$n\mid\{6,15\}_n$.

\end{itemize}
\end{proof}

\begin{proof}[\textbf{Proof of \cref{thm:oldSUN}}]
If $c=0$, replacing a retained row index $j$ with $-j$ leaves every entry unchanged, so the matrix must contain two identical rows. If $4d=3c^2$ in $\F_p$, then $a_2=a_{p-3}=0$. For $p\ge7$, the corresponding eigenvectors are distinct. Likewise, if $12d=5c^2$, then $a_3=a_{p-4}=0$ and these indices are distinct for $p\ge11$. Therefore, by \cref{lem:spectrum}, the full matrix defining $(c,d)_p$ has nullity at least two over $\F_p$, which implies that $(c,d)_p$ is divisible by $p^2$.
\begin{itemize}[leftmargin=2em]
\item Let $n=p$ be prime.  For $(c,d)=(2,3)$ and $p\ge7$,
The above discussion gives $p^2\mid(2,3)_p$.  For $p=5$, the
matrix $\mathcal M^{\mathrm{full}}_5(2,3)$ displayed above annihilates
\[
  \vec\kappa_{2,3;5}=(0,1,-1,-1,1)^{\mathsf T}
\]
over $\Z$, so $(2,3)_5=0$.  Likewise, for $(c,d)=(6,15)$ and
$p\ge11$, The above discussion gives $p^2\mid(6,15)_p$, while
the displayed matrix $\mathcal M^{\mathrm{full}}_7(6,15)$ annihilates
\[
  \vec\kappa_{6,15;7}=(0,-1,-1,1,-1,1,1)^{\mathsf T}
\]
over $\Z$, so $(6,15)_7=0$.

\item \emph{Composite moduli.} By \cref{lem:composite}(i), a non-squarefree modulus causes the full determinant to vanish. We may therefore assume that $n$ is squarefree and composite. Fix a prime factor $p\mid n$ and set $M=\frac{n}{p}$. For $(2,3)$, the local full matrix is singular
over $\F_p$: if $p\ge5$, extend $\vec w_2$ by zero at the zero coordinate,
as in \cref{lem:spectrum}. If $p=3$, use
\[
 \vec\kappa_{2,3;3}=(1,1,1)^{\mathsf T}.
\]
For $(6,15)$, use the extension of $\vec w_3$ when $p\ge7$, and use
\[
  \vec\kappa_{6,15;3}=(2,1,0)^{\mathsf T},
  \qquad
  \vec\kappa_{6,15;5}=(4,3,2,1,0)^{\mathsf T},
\]
when $p=3,5$, respectively.  Thus in either case the local nullity is at
least one.

Part (iii) of \cref{lem:composite}, with the single deleted index $0$, establishes that the nullity is at least $M-1\ge2$ for the matrix at modulus $n$. Consequently, $p^2$ divides the corresponding determinant for every $p\mid n$.  Since $n$ is
squarefree, this gives
\[
 n^2\mid(2,3)_n,\qquad n^2\mid(6,15)_n.
\]

\item The prime and squarefree composite cases above, together with the repeated-row observation from \cref{lem:composite}(i), complete the proof.
\end{itemize}
\end{proof}

\begin{remark}\label[remark]{rem:gene}
    The case studies treated in the preceding four subsections are not  isolated phenomena, but instances of a flexible generating mechanism.  By varying the coefficient index in the expansion   of $(X^2+cX+d)^{\frac{p-1}{2}}$ and applying the same three-step reduction (specialize parameters $\to$ identify   the CM elliptic curve $\to$ apply Deuring's theorem), one obtains many further families of $(c,d)$ pairs whose  associated determinants exhibit prescribed vanishing or  divisibility behaviour.

    As a representative example, the one-sixth coefficient rests  on \cite[Theorem 2.2]{ZHSunIII}, which states that for a
    prime $p>3$ and $m,n\in \Z_{(p)}$ with $m\not\equiv0\pmod{p}$,
    \[
        P_{\lfloor \frac{p}{6}\rfloor}\Bigl(\frac{n}{2m^3}\Bigr)
        \equiv -\Leg{3m}{p}
        \sum_{x=0}^{p-1}
        \Leg{x^3-3m^2x+n}{p}
        \pmod{p}.
    \]
    Following the approach used for the one-third and one-quarter  coefficients, one has
    \begin{equation*}
        P_{\lfloor \frac{p}{6}\rfloor}(T)
        =\Leg3p\mathcal H_p^{(6)}(T)
        \quad\text{in }\F_p[T],
    \end{equation*}
    where $p>5$, $F_T(x)=x^3-3x+2T$ and  $\mathcal H_p^{(6)}(T)=[x^{p-1}]F_T(x)^{\frac{p-1}{2}}$.  Therefore, choosing $(c,d)$ and $p$ to satisfy the conditions  of \cref{thm:deuring}, it can be shown that:
    \begin{enumerate}
        \item If $p>5$ and $p\equiv5,11\pmod{12}$, then
        $p\mid\{22,125\}_p$.  If $p\equiv17,53\pmod{60}$, then
        in fact $p^2\mid\{22,125\}_p$.

        \item If $p>5$ and $p\equiv7,13\pmod{24}$, then
        $\{28,250\}_p=0$.  If $p\equiv5,23\pmod{24}$, then
        $p\mid\{28,250\}_p$.  In the subcases
        $p\equiv29,101\pmod{120}$, one has
        $p^2\mid\{28,250\}_p$.

        \item If $p>7$ and
        $p\equiv13,19,31\pmod{42}$, then
        $\{126,2625\}_p=0$.  If
        $p\equiv5,17,41\pmod{42}$, then
        $p\mid\{126,2625\}_p$.
    \end{enumerate}
    The proofs are structurally identical to those of  \cref{thm:main} and thus are omitted here.   The composite modulus case follows similarly via  \cref{lem:composite}.  
    
    We expect that systematically enumerating admissible $(c,d)$ pairs for different coefficients will uncover further identities of the same shape. Moreover, one may consider determinants   associated to polynomials of higher degree in place of the quadratic form $X^2+cX+d$. The reduction to Hasse invariants would then involve Jacobians of higher-genus curves or more general abelian varieties, where \cite{Zaytsev} provides the necessary generalization of Deuring's theorem.
\end{remark}

\appendix

\section{Exact SageMath Verification of the Used CM Data}
\label{app:cm-verification}

All calculations in the following listings are exact.  The discriminant in the output is that of the displayed
Weierstrass equation.  Each listing can be run by pasting it directly into \url{https://sagecell.sagemath.org/} and selecting \texttt{Evaluate}. For reliable reproduction, copy the listings from this appendix's \TeX \, source file rather than from the rendered PDF.

\begin{lstlisting}[language=Python,
caption={SageMath verification of \cref{tab:central-cm}.},
label={lst:central-cm}]
from sage.schemes.elliptic_curves.cm import hilbert_class_polynomial

rows = [(3, 2), (4, 2), (8, 8), (3, 3), (42, -7), (21, 112)]

def cm_discriminant(j):
    """Find D in [-50,-3] such that H_D(j)=0."""
    candidates = [
        D for D in range(-3, -51, -1)
        if D % 4 in (0, 1) and hilbert_class_polynomial(D)(j) == 0
    ]
    assert len(candidates) == 1, (j, candidates)
    return candidates[0]

def cm_field(D):
    radicand = -prod(p for p, e in factor(-D) if e % 2)
    if radicand == -1:
        return r"\mathbb Q(i)"
    return r"\mathbb Q(\sqrt{%s})" % radicand

print(r"$(c,d)$ & $\Delta(E_{c,d})$ & "
      r"$j(E_{c,d})$ & $D$ & CM field \\")
for c, d in rows:
    E = EllipticCurve([0, c, 0, d, 0])
    Delta, j = E.discriminant(), E.j_invariant()
    D = cm_discriminant(j)
    print(r"$(%s,%s)$ & $%s$ & $%s$ & $%s$ & $%s$ \\" % (
        c, d, latex(factor(Delta)), latex(j), D, cm_field(D)
    ))
\end{lstlisting}

\begin{lstlisting}[language=Python,
caption={Hilbert class polynomials in the range $-30\leq D\leq -3$.},
label={lst:hilbert-class-polynomials}]
from sage.schemes.elliptic_curves.cm import hilbert_class_polynomial

R = PolynomialRing(ZZ, "Y")
for D in range(-30, -2):
    if D % 4 in (0, 1):
        H = R(hilbert_class_polynomial(D))
        print(r"$D=%s$: $H_D(Y)=%s$ \\" % (D, latex(H)))
\end{lstlisting}

\begin{lstlisting}[language=Python,
caption={SageMath verification of \cref{tab:quarter-cm}.},
label={lst:quarter-cm}]
from sage.schemes.elliptic_curves.cm import hilbert_class_polynomial

P = PolynomialRing(QQ, "X"); X = P.gen()
R = PolynomialRing(ZZ, "Y")
rows = [(5, 5, QQ(5)/4, True), (10, 9, -QQ(5)/3, False),
        (8, 18, QQ(8)/9, True)]

def cm_discriminant(j):
    candidates = [D for D in range(-3, -51, -1)
                  if D % 4 in (0, 1) and hilbert_class_polynomial(D)(j) == 0]
    assert len(candidates) == 1, (j, candidates)
    return candidates[0]

def cm_field(D):
    radicand = -prod(p for p, e in factor(-D) if e % 2)
    return r"\mathbb Q(i)" if radicand == -1 else r"\mathbb Q(\sqrt{%s})" % radicand

print(r"$(c,d)$ & parameter & $j$ & $\Delta_t$ & $D$ & "
      r"$H_D(Y)$ & CM field \\")
for c, d, q, quadratic in rows:
    if quadratic:
        K = NumberField(X^2 - q, "t"); t = K.gen()
        parameter = r"t^2=%s" % latex(q)
    else:
        K, t = QQ, q
        parameter = r"t=%s" % latex(t)
    E = EllipticCurve(K, [0, 0, 0, -K(3)/2*(3*t+5), 9*t+7])
    Delta, j = E.discriminant(), E.j_invariant()
    D = cm_discriminant(j)
    H = R(hilbert_class_polynomial(D))
    print(r"$(%s,%s)$ & $%s$ & $%s$ & $%s$ & $%s$ & $%s$ & $%s$ \\" % (
        c, d, parameter, latex(j), latex(Delta), D, latex(H), cm_field(D)
    ))
\end{lstlisting}

\end{document}